\newtheorem{theo+}              {Theorem}           [section]
\newtheorem{prop+}  [theo+]     {Proposition}
\newtheorem{coro+}  [theo+]     {Corollary}
\newtheorem{lemm+}  [theo+]     {Lemma}
\newtheorem{exam+}  [theo+]     {Example}
\newtheorem{rema+}  [theo+]     {Remark}
\newtheorem{defi+}  [theo+]     {Definition}
\def \r{\mbox{${\mathbb R}$}}
\newenvironment{theorem}{\begin{theo+}}{\end{theo+}}
\newenvironment{proposition}{\begin{prop+}}{\end{prop+}}
\newenvironment{corollary}{\begin{coro+}}{\end{coro+}}
\theoremstyle{plain} \theoremstyle{remark}
\newtheorem{remark}{Remark}
\newtheorem{example}{Example}
\def\E{/\kern-1.0em \equiv }
\author{}
\begin{document}
\title[Biharmonic  Riemannian submersions]{A short survey on biharmonic  Riemannian submersions}
\subjclass{58E20} \keywords{Biharmonic maps, biharmonic Riemannian submersions, BCV spaces, Thurston's 3-dimensional geometries.}
$^{*}$ Dedicated to Professor B.-Y. Chen for his 80th birthdate\\

\author{Ye-Lin Ou $^{*}$}
%\thanks{$^{*}$ This work was supported by a grant from the Simons Foundation ($\#427231$, Ye-Lin Ou).}
\address{Department of
Mathematics,\newline\indent Texas A $\&$ M University-Commerce,
\newline\indent Commerce, TX 75429,\newline\indent USA.\newline\indent
E-mail:yelin.ou@tamuc.edu }
\date{04/06/2024}
\maketitle
\section*{Abstract}
\begin{quote} The study of biharmonic submanifolds, initiated by B. Y. Chen \cite{Ch1, CI} and  G. Y. Jiang \cite{Ji86, Ji87} independently, has received a great attention  in the past 30 years with many important progress (see \cite{On2},  \cite{OC}  and the vast references therein). This note  attempts to give a short survey on the study of biharmonic Riemannian submersions which are a dual concept of biharmonic submanifolds (i.e., biharmonic isometric immersions).
{\footnotesize } 
\end{quote}

\section{Why biharmonic Riemannian submersions?}

\indent A map $\varphi: (M^m, g)\to(N^n,h)$ between Riemannian manifolds is called a {\bf harmonic map} if its tension field $\tau (\varphi)={\rm Trace}_g\nabla{\rm d}\varphi$ vanishes identically.  Harmonic maps include many important  objects studied in mathematics, such as harmonic functions, geodesics, minimal submanifolds, and Riemannian submersions with minimal fibers.  For more on the related theory, applications and interesting links of harmonic maps see \cite{EL78}, \cite{EL83}, \cite{EL88} and \cite{YS97}.\\

\indent A  map  $\varphi: (M^m, g)\to(N^n,h)$ is called a {\bf biharmonic map} if it is a critical point of the bienergy functional
$E_2(\o, g)=\frac{1}{2}\int_{M} |\tau (\varphi)|^2dv_g\,,$
where $\tau (\varphi )$ is the tension field of $\varphi$. The first variation of the bienergy (see \cite{Ji86}) gives the biharmonic map equation as:  
\begin{equation} \label{bihar}
\tau_2(\varphi ) :={\rm Trace}_g\left(  (\nabla^{\varphi})^2 \tau (\varphi ) - R^N( {\rm d}\varphi, \tau (\varphi )){\rm d}\varphi\right) = 0
\end{equation}
where $R^N$ is the Riemannian curvature operator of the manifold $(N, h)$.\\

Clearly, any harmonic map ($\tau (\varphi )\equiv 0$) is biharmonic.  So, we call a  biharmonic map which is  not harmonic a {\bf proper biharmonic map}.

\indent {\bf Biharmonic submanifolds} are the images of biharmonic isometric immersions. As biharmonic maps generalize the concept  of harmonic maps, biharmonic submanifolds are a generalization of minimal submanifolds which are characterized as the images of harmonic isometric immersions. \\

The study of biharmonic submanifolds was initiated by B. Y. Chen \cite{Ch1, CI} and  G. Y. Jiang \cite{Ji86, Ji87} independently. It  has become a hot topic of research with many important progress in the past 30 years. For more detailed account on the origins, the main problems, and  some recent progress  see a recent book \cite{OC} and the vast references therein.\\

Riemannian submersions, as a dual concept of isometric immersions, are  a subclass of conformal submersions which are special cases of horizontally weakly conformal maps.
Recall that a map $\varphi:(M, g)\to (N, h)$ between Riemannian manifolds is  {\bf horizontally weakly conformal} if for any $p \in M$, either  ${\rm d} \varphi_p=0$, or
 ${\rm d} \varphi_{p}|_{ \mathcal{H}_p} : \mathcal{H}_p \to T_{\varphi(p)} N$ is
conformal and onto, where $ \mathcal{H}_p=({\rm ker} {\rm d} \varphi_p)^{\bot}$ is the horizontal subspace of $\varphi$.  This is equivalent to (see, e.g., \cite{BW}) the existence of  function $\lambda\ge 0$, called the {\bf dilation} of $\varphi$, such that $h({\rm d} \varphi
(X),{\rm d} \varphi (Y)) = \lambda^{2}(p)g(X,Y)$ for any  $X, Y \in
 \mathcal{H}_p$.   A {\bf conformal submersion} is horizontally weakly conformal map with dilation $\lambda>0$, and in particular, when $\lambda\equiv 1$ it is called  a {\bf Riemannian submersion}.\\

 The motivations to study biharmonic Riemannian submersions include 
 \begin{itemize}
 \item[(1)] biharmonic Riemannian submersions are a dual concept of biharmonic submanifolds which have exhibited a rich theory and applications in understanding the submanifolds and the ambient spaces; 
 \item[(2)] biharmonic Riemannian submersions are a subclass of
biharmonic conformal submersions which are a generalization of harmonic morphisms. It would be interesting to know what part of the theory, applications and interesting links of harmonic morphisms can be generalized to the case of biharmonic Riemannian submersions; 
\item[(3)] within the class of Riemannian submersions the 4th order biharmonic map equations reduce to a 2nd order PDEs.
\end{itemize}
Biharmonic Riemannian  submersions  were first studied in \cite{On} where the bitension field of a Riemannian submersion was derived under the hypothesis that the tension field  is basic, and  some existence and non existence results were also obtained. Some recent work has  shown that there are many examples of proper biharmonic Riemannian submersions whilst there are also many examples of Riemannian submersions which are neither harmonic nor biharmonic.

\begin{example}
All of the following are {\bf proper biharmonic Riemannian submersions}.\\
(a) $\varphi:M^4=\r^4\setminus\{(0, 0, 0, x): x\in \r\}\to \r^2$ with $\varphi(x_1,\cdots, x_4)=(\sqrt{x_1^2+x_2^2+x_3^2\,}, x_4)$, see \cite{GO} for details.\\
(b)  The projection of the warped product (see \cite{GO} for details)
 \begin{align}\notag
\varphi  : ( \mathbb{R}^2_{+} \times \mathbb{R} , dx^2 +
dy^2+Cy^4 dz^2) &\to (\mathbb{R}^2_{+} ,dx^2 + dy^2) \\\notag
\pi(x,y,z) =(x,y),
\end{align}
 where $\mathbb{R}^2_{+}=\{ (x,y)\in \r^2: y>0\}$  and $C$ be a positive constant. \\
(c) The projection from the product space $H^2\times\r$ (see \cite{WO2} for details) \\ $\varphi :H^2\times\r=(\r^{3},e^{2y}dx^2+dy^2+dz^2) \to
(\r^2 ,dy^2 + dz^2)$, $ \varphi(x,y,z) =(y,z)$.\\
(d) $\varphi:\widetilde{SL}(2,\r)=(\r^{3}_{+},\frac{dx^2+dy^2}{y^2}+(dz+\frac{dx}{y} )^2)
\to (\r^2 ,\frac{dy^2}{y^2} +\frac{dz^2}{2})$ with\\ $\varphi(x,y,z) =(y,z)$, see \cite{WO2} for details.
\end{example}

\begin{example}
The following Riemannian submersions are {\bf neither harmonic nor biharmonic maps}.\\
(e) The metric projection $\varphi:\mathbb{R}^3\to (N^2=\mathbb{R}^3/\mathbb{R},h)$
from Euclidean space $\mathbb{R}^3$ onto the orbit space of  a free $1$-parameter
isometric group action of $\mathbb{R}$. See \cite{WO1} for details. \\
(f) The Riemannian submersion from Nil space (see \cite{WO1} for details)
\begin{align}\notag
\varphi  : ( \mathbb{R}^3 , g_{Nil}={\rm d}x^{2}+{\rm
d}y^{2}+({\rm d}z-x{\rm d}y)^{2}) &\to (\mathbb{R}^2 ,dx^2 + (1+x^2)^{-2}dz^2)
\\\notag
\varphi(x,y,z) =(x,z).
\end{align}
(g)  The Riemannian submersion from Sol space (see \cite{WO4} for details)
 \begin{equation}\notag
\begin{array}{lll}
\varphi:(\r^3,g_{Sol}= e^{2z}{\rm d}x^{2}+e^{-2z}{\rm d}y^{2}+{\rm
 d}z^{2})\to (\r^2, e^{-2z}{\rm d}y^{2}+{\rm
 d}z^{2}),\;\varphi(x,y,z)=(y,z),
\end{array}
\end{equation}
\end{example}

\section{Biharmonic equations of Riemannian submersions}

{\bf I. The case of general fiber dimensions:} It is well known  that  the tension field of a Riemannian submersion $\varphi: (M^m,g)\to (N^n,h)$ is given by
\begin{equation}\label{T}
\tau(\varphi) = -(m-n)d\varphi(\mu),
\end{equation}
where $\mu = \tfrac{1}{m-n} \sum_{s=1}^{m-n} (\nabla^{M}_{e_{s}} e_{s})^{\mathcal{H}}$  with $\{e_s\}$ being an orthonormal
frame of the fibers is the mean curvature vector field of the fibers of the Riemannian submersion. It follows that the harmonicity of a Riemannian submersion is closely related to the minimality of the fibers of the Riemannian submersion.

The biharmonic equation of a  Riemannian submersion  as a special case  of the conformal submersion given in \cite{BFO} and \cite{LO10} seems hard to apply. The following form of the  bitension field of a Riemannian submersion was derived. 
\begin{theorem}\label{MT} $($\cite{AO}$)$
The bitension field of a Riemannian submersion\\ $\varphi: (M^m,g)\to (N^n,h)$ is given by
\begin{eqnarray}\notag
\tau_{2}(\varphi)&=&-(m-n)d\varphi \Big(  \sum_{i=1}^n\big\{\nabla^{M}_{e_{i}}(\nabla^{M}_{e_{i}}\mu)^{\mathcal{H}}- \nabla^{M}_{(\nabla^{M}_{e_i}e_i)^{\mathcal{H}}}\mu+[\mu, (\nabla^{M}_{e_i}e_i)^{\mathcal{V}}]\big\}\\\notag
&&+\sum_{s=n+1}^m\{ [[\mu,e_s],e_s] +[\mu,(\nabla^{M}_{e_s}e_s)^{\mathcal{V}}]\}-(m-n)\nabla^{M}_{\mu}\mu \Big)\\\label{EQ1}
&&-(m-n){\rm Ricci}^N(d\varphi(\mu)),
\end{eqnarray}
where $\{e_{i},e_{s}\} \;(1\le i\le n;  1\le s\le m-n)$  is a local orthonormal frame on $M$ with $e_{i}$ horizontal and $e_{s}$ vertical, and  $\mu$ the mean curvature vector field of the fibers of $\varphi$.
\end{theorem}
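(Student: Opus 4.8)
The plan is to substitute the known tension field $\tau(\varphi)=-(m-n)\,d\varphi(\mu)$ from \eqref{T} directly into the defining equation \eqref{bihar}, and to evaluate the two resulting pieces — the rough Laplacian ${\rm Trace}_g(\nabla^{\varphi})^2\tau(\varphi)$ and the curvature term $-{\rm Trace}_g R^N(d\varphi,\tau(\varphi))d\varphi$ — relative to the adapted orthonormal frame $\{e_i,e_s\}$ with $e_i$ horizontal and $e_s$ vertical. Since $\tau(\varphi)$ is $-(m-n)$ times $d\varphi$ applied to the horizontal field $\mu$, the whole computation reduces to understanding how the pullback connection $\nabla^{\varphi}$ acts on $d\varphi(\mu)$, and then to iterating that action.

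The key tool I would set up first is the second fundamental form $\beta:=\nabla d\varphi$ of a Riemannian submersion. Using O'Neill's projectability formula for basic horizontal fields one checks that $\beta(X,Y)=0$ when $X,Y$ are horizontal, that $\beta(X,V)=-d\varphi\big((\nabla^M_X V)^{\mathcal H}\big)$ when $X$ is horizontal and $V$ vertical, and that tracing $\beta$ over a vertical frame recovers $\tau(\varphi)$. From these, together with the fact that $d\varphi$ annihilates vertical vectors, two core identities follow for the horizontal field $\mu$: in a horizontal direction, $\nabla^{\varphi}_{e_i}d\varphi(\mu)=d\varphi\big((\nabla^M_{e_i}\mu)^{\mathcal H}\big)$, while in a vertical direction $V$ the second fundamental form contributes and one finds $\nabla^{\varphi}_{V}d\varphi(\mu)=-d\varphi\big([\mu,V]^{\mathcal H}\big)$, the Lie bracket appearing precisely because $\nabla^M_V\mu-\nabla^M_\mu V=[V,\mu]$. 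These identities are the engine that converts every pullback derivative into $d\varphi$ of an explicit field on $M$.

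The curvature term is the easy part. Because $d\varphi(e_s)=0$, only the horizontal frame vectors survive in ${\rm Trace}_g R^N(d\varphi,\tau(\varphi))d\varphi=\sum_i R^N(d\varphi(e_i),\tau(\varphi))d\varphi(e_i)$, and since $\{d\varphi(e_i)\}$ is an orthonormal basis of $T_{\varphi(p)}N$ this trace is exactly a Ricci contraction; substituting $\tau(\varphi)=-(m-n)d\varphi(\mu)$ yields the final summand $-(m-n)\,{\rm Ricci}^N(d\varphi(\mu))$.

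The heart of the proof — and the main obstacle — is the rough Laplacian ${\rm Trace}_g(\nabla^{\varphi})^2\tau(\varphi)=\sum_a\big(\nabla^{\varphi}_{e_a}\nabla^{\varphi}_{e_a}-\nabla^{\varphi}_{\nabla^M_{e_a}e_a}\big)\tau(\varphi)$, which I would compute by applying the two core identities twice and sorting horizontal and vertical components at each stage. Differentiating $d\varphi\big((\nabla^M_{e_i}\mu)^{\mathcal H}\big)$ once more in the horizontal directions gives the first brace $\nabla^M_{e_i}(\nabla^M_{e_i}\mu)^{\mathcal H}-\nabla^M_{(\nabla^M_{e_i}e_i)^{\mathcal H}}\mu+[\mu,(\nabla^M_{e_i}e_i)^{\mathcal V}]$; differentiating $-d\varphi([\mu,e_s]^{\mathcal H})$ again in the vertical directions, through the vertical second fundamental form, produces $[[\mu,e_s],e_s]+[\mu,(\nabla^M_{e_s}e_s)^{\mathcal V}]$; and the horizontal part of $\nabla^M_{e_s}e_s$, summed via $\sum_s(\nabla^M_{e_s}e_s)^{\mathcal H}=(m-n)\mu$, collapses to $-(m-n)\nabla^M_\mu\mu$. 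The delicate point throughout lies in the vertical directions: there $d\varphi$ kills vertical vectors yet $\beta(e_s,\cdot)$ does not, so each iterated derivative splits into several horizontal and vertical pieces. The vertical remnants must be reorganized, and the crucial simplification is that the horizontal projection $[[\mu,e_s]^{\mathcal V},e_s]^{\mathcal H}$ vanishes by the symmetry of the fibers' second fundamental form (the O'Neill tensor $T$), so that $d\varphi([[\mu,e_s]^{\mathcal H},e_s])=d\varphi([[\mu,e_s],e_s])$ and the clean Lie-bracket form emerges. A geodesic frame at the point keeps the spurious first-order terms under control and makes the final regrouping manageable; assembling all pieces and restoring the overall factor $-(m-n)$ and the outer $d\varphi$ then yields \eqref{EQ1}.
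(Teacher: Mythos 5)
Since this survey states Theorem \ref{MT} with only a citation to \cite{AO} and no proof of its own, there is no in-paper argument to compare with; judged on its own merits, your derivation is correct, and it follows precisely the route of the cited source: substitute $\tau(\varphi)=-(m-n)\,d\varphi(\mu)$ into \eqref{bihar}, reduce every pullback derivative via the two identities $\nabla^{\varphi}_{e_i}d\varphi(\mu)=d\varphi\big((\nabla^{M}_{e_i}\mu)^{\mathcal{H}}\big)$ and $\nabla^{\varphi}_{V}d\varphi(\mu)=-d\varphi\big([\mu,V]^{\mathcal{H}}\big)$, and identify the curvature trace over the horizontal frame as $-(m-n)\,{\rm Ricci}^N(d\varphi(\mu))$. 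Your key simplification $d\varphi\big([[\mu,e_s]^{\mathcal{H}},e_s]\big)=d\varphi\big([[\mu,e_s],e_s]\big)$ is indeed valid, since $[[\mu,e_s]^{\mathcal{V}},e_s]$ is vertical (symmetry of the $T$-tensor, equivalently integrability of the vertical distribution). One cosmetic caveat: an adapted frame cannot in general be normalized to be geodesic at a point when $\mu\neq 0$, because $\sum_{s}(\nabla^{M}_{e_s}e_s)^{\mathcal{H}}=(m-n)\mu$ is an obstruction; this does not affect your argument, since the computation you describe retains all the connection terms $\nabla^{M}_{e_a}e_a$ explicitly rather than killing them by such a normalization.
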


\begin{corollary}\label{Co1}$($\cite{AO}$)$
A Riemannian submersion $\varphi: (M^m,g)\to (N^n,h)$  with basic mean curvature vector field $\mu$ of the fibers is biharmonic if and only if
\begin{eqnarray}\label{E1}
&&{\rm d}\varphi \Big(  \sum_{i=1}^n\big\{\nabla^{M}_{e_{i}}(\nabla^{M}_{e_{i}}\mu)^{\mathcal{H}}- \nabla^{M}_{(\nabla^{M}_{e_i}e_i)^{\mathcal{H}}}\mu\big\}
-(m-n)\nabla^{M}_{\mu}\mu \Big)+{\rm Ricci}^N(d\varphi(\mu))=0.
\end{eqnarray}
\end{corollary}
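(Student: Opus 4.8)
The plan is to deduce Corollary \ref{Co1} directly from the general formula \eqref{EQ1} of Theorem \ref{MT} by showing that, once $\mu$ is assumed basic, the terms occurring in \eqref{EQ1} but absent from \eqref{E1} are all annihilated by $d\varphi$. Comparing the two displays, the terms to be disposed of are $d\varphi([\mu,(\nabla^{M}_{e_i}e_i)^{\mathcal V}])$ inside the horizontal sum, together with the entire vertical sum $\sum_{s=n+1}^{m} d\varphi\big([[\mu,e_s],e_s]+[\mu,(\nabla^{M}_{e_s}e_s)^{\mathcal V}]\big)$. Everything else in \eqref{EQ1} matches \eqref{E1} after we set $\tau_2(\varphi)=0$ and divide by the nonzero factor $-(m-n)$.

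First I would recall two standard facts about a Riemannian submersion $\varphi$. (i) Since $\ker d\varphi$ is exactly the vertical distribution $\mathcal V$, the map $d\varphi$ annihilates every vertical vector field. (ii) A horizontal vector field $X$ is basic precisely when $[X,V]$ is vertical for every vertical $V$; moreover $\mathcal V$ is integrable (its integral manifolds being the fibers), so $[V,W]$ is vertical whenever $V$ and $W$ are both vertical.

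Next I would apply these facts term by term. The fields $(\nabla^{M}_{e_i}e_i)^{\mathcal V}$ and $(\nabla^{M}_{e_s}e_s)^{\mathcal V}$ are vertical by construction, so with $\mu$ basic the brackets $[\mu,(\nabla^{M}_{e_i}e_i)^{\mathcal V}]$ and $[\mu,(\nabla^{M}_{e_s}e_s)^{\mathcal V}]$ are vertical by (ii), hence killed by $d\varphi$ by (i). For the iterated bracket $[[\mu,e_s],e_s]$ I would argue in two stages: since $e_s$ is vertical and $\mu$ is basic, $[\mu,e_s]$ is vertical by (ii); then $[[\mu,e_s],e_s]$ is a bracket of two vertical fields, hence again vertical by the integrability in (ii), and so is annihilated by $d\varphi$ as well.

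With all three terms removed, \eqref{EQ1} collapses to the same expression with those bracket terms deleted; setting the resulting $\tau_2(\varphi)$ equal to zero and dividing through by $-(m-n)$ then yields exactly \eqref{E1}. I expect the only real care to be needed in justifying that $[[\mu,e_s],e_s]$ is vertical---this is the single place where both the basic hypothesis and the integrability of $\mathcal V$ are invoked---while the remaining reductions are immediate consequences of (i) and (ii).
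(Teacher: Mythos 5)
Your proposal is correct and follows exactly the route the paper intends: the corollary is the specialization of Theorem \ref{MT} in which the basic hypothesis on $\mu$ makes $[\mu,(\nabla^{M}_{e_i}e_i)^{\mathcal V}]$, $[\mu,(\nabla^{M}_{e_s}e_s)^{\mathcal V}]$ and (via integrability of the vertical distribution) $[[\mu,e_s],e_s]$ all vertical, hence annihilated by ${\rm d}\varphi$, after which setting $\tau_2(\varphi)=0$ and cancelling $-(m-n)$ gives \eqref{E1}. Your identification of the verticality of the iterated bracket as the one step needing both hypotheses is exactly right.
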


Note that by using (\ref{T}) one can check that Equation (\ref{E1}) is equivalent to the following one which was obtained in  \cite{On}:
\begin{align}\label{Bas}
{\rm Tr}_h\,(\nabla^{N})^2\tau(\varphi)
+\nabla^{N}_{\tau(\varphi)}\tau(\varphi) +{\rm Ricci}^N(\tau(\varphi))=0.
\end{align}

Many examples of proper biharmonic Riemannian submersions come from the projections of a warped product onto its first factor. Here we have

\begin{theorem} \label{MT2}$($\cite{AO}$)$
The Riemannian submersion
\begin{align}\notag
\varphi: ( M^{m} \times N^n , g_M +e^{2\lambda} g_N) \to (M^m , g_M),\;\;
\varphi(x,y) =x
\end{align}
 is biharmonic if and only if
\begin{equation}\label{W-P}
{\rm grad}\, \Delta\lambda+2{\rm Ricci}^{M}({\rm grad}\,\lambda)+\frac{n}{2}{\rm grad}\,|{\rm grad}\lambda|^{2}=0,
\end{equation}
where ${\rm grad}$ and $\Delta$ are the gradient and the Laplace operators on $(M^m, g_M)$.
\end{theorem}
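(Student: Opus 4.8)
The plan is to recognize this warped-product projection as a Riemannian submersion whose fibers have \emph{basic} mean curvature, so that the reduced second-order equation (\ref{Bas}) (equivalently Corollary \ref{Co1}) applies, and then to reduce (\ref{Bas}) to (\ref{W-P}) using two standard identities from Riemannian geometry.

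First I would fix the bundle-like splitting: the vertical distribution is tangent to the $N$-factor and the horizontal distribution is tangent to the $M$-factor, and $\varphi$ restricts to an isometry on each horizontal space, so it is indeed a Riemannian submersion onto $(M^m, g_M)$. Using the O'Neill formulas for the warped metric $g_M + e^{2\lambda}g_N$, for vertical $V,W$ one has $(\nabla_V W)^{\mathcal H} = -\,g(V,W)\,{\rm grad}\,\lambda$, so tracing over an orthonormal frame of the $n$-dimensional fibers gives the mean curvature vector field
\[
\mu = -\,{\rm grad}\,\lambda .
\]
Since $\lambda$ is a function on the base $M$, the field $\mu$ is the horizontal lift of $-{\rm grad}^M\lambda$, hence \textbf{basic}. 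This is the key structural observation: it lets me bypass the full fourth-order formula (\ref{EQ1}) and instead invoke the reduced equation (\ref{Bas}), which is valid precisely for submersions with basic mean curvature.

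Next I would translate the data to the target. From (\ref{T}) (with fiber dimension $n$ here) the tension field is $\tau(\varphi) = -n\,{\rm d}\varphi(\mu) = n\,{\rm grad}\,\lambda$, a basic field that I identify with the vector field $n\,{\rm grad}^M\lambda$ on $M$. Substituting $\tau(\varphi)=n\,{\rm grad}\,\lambda$ into (\ref{Bas}) turns biharmonicity into an equation purely on $(M^m,g_M)$:
\[
{\rm Tr}_{g_M}(\nabla^M)^2\big(n\,{\rm grad}\,\lambda\big)+\nabla^M_{n\,{\rm grad}\,\lambda}\big(n\,{\rm grad}\,\lambda\big)+{\rm Ricci}^M\big(n\,{\rm grad}\,\lambda\big)=0 .
\]

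Finally I would evaluate the three terms with two standard identities. The Bochner--Weitzenb\"ock formula applied to the exact $1$-form ${\rm d}\lambda$ gives ${\rm Tr}_{g_M}(\nabla^M)^2({\rm grad}\,\lambda)={\rm grad}\,(\Delta\lambda)+{\rm Ricci}^M({\rm grad}\,\lambda)$, and symmetry of the Hessian gives $\nabla^M_{{\rm grad}\,\lambda}{\rm grad}\,\lambda=\tfrac{1}{2}\,{\rm grad}\,|{\rm grad}\,\lambda|^2$. Inserting these, collecting the two resulting Ricci terms, and dividing by $n$ yields exactly (\ref{W-P}). The main thing to watch is the bookkeeping: the roles of $m$ and $n$ are interchanged relative to Theorem \ref{MT} (here the fiber dimension is $n$, not $m-n$), and the sign and normalization conventions for $\Delta$, for the rough Laplacian ${\rm Tr}_{g_M}(\nabla^M)^2$, and for the Bochner formula must be kept mutually consistent. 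This sign bookkeeping, together with the reliance on the Weitzenb\"ock identity as the analytic core, is really the only obstacle, since once $\mu$ is recognized as basic the geometry collapses to the second-order equation (\ref{Bas}).
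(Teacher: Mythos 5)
Your proposal is correct and takes essentially the same route the paper sets up for this theorem: observe that the warped-product projection has basic mean curvature $\mu=-{\rm grad}\,\lambda$, so the fourth-order problem reduces to the second-order equation (\ref{Bas}) (equivalently Corollary \ref{Co1}), and then convert the rough Laplacian term by the standard Bochner commutation identity, which is precisely where the coefficient $2$ on the Ricci term in (\ref{W-P}) comes from. Your bookkeeping also checks out: $\tau(\varphi)=n\,{\rm grad}\,\lambda$ gives $\tfrac{n^{2}}{2}{\rm grad}\,|{\rm grad}\,\lambda|^{2}$ before dividing by $n$, consistent both with (\ref{W-P}) and with the Einstein-case corollary that follows it in the paper.
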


\begin{corollary}$($\cite{AO}$)$
The Riemannian submersion
\begin{align}\notag
\phi: ( M^{m} \times N^n , g_M +e^{2\lambda} g_N) \longrightarrow (M^m , g_M),\;\;
\phi(x,y) =x
\end{align}
from an Einstein Manifold with ${\rm Ricci}^{M}=ag_M$ is biharmonic if and only if
\begin{equation}\label{Einstein}
 \Delta\lambda+2a\lambda+\frac{n}{2}|{\rm grad}\lambda|^{2}=C
\end{equation}
for a constant $C$, where $\Delta$ is the Laplacian on $(M^m, g_M)$.
\end{corollary}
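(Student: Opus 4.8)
The plan is to specialize the biharmonicity criterion \eqref{W-P} of Theorem \ref{MT2} to the Einstein setting, where the Ricci term collapses into a gradient and the entire left-hand side becomes an exact differential. First I would rewrite the Einstein condition ${\rm Ricci}^M = a g_M$, which is an identity of $(0,2)$-tensors, as a statement about the Ricci operator acting on vector fields. Raising an index turns $a g_M$ into $a$ times the identity endomorphism, so that ${\rm Ricci}^M({\rm grad}\,\lambda) = a\,{\rm grad}\,\lambda$ for the gradient vector field of $\lambda$ on $(M^m, g_M)$.

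Substituting this into \eqref{W-P}, the criterion of Theorem \ref{MT2} reads
\begin{equation}\notag
{\rm grad}\,\Delta\lambda + 2a\,{\rm grad}\,\lambda + \frac{n}{2}{\rm grad}\,|{\rm grad}\,\lambda|^{2} = 0.
\end{equation}
The key observation is that $a$ is a constant, so $2a\,{\rm grad}\,\lambda = {\rm grad}(2a\lambda)$, and by linearity of ${\rm grad}$ the three terms combine into a single gradient:
\begin{equation}\notag
{\rm grad}\Big(\Delta\lambda + 2a\lambda + \frac{n}{2}|{\rm grad}\,\lambda|^{2}\Big) = 0.
\end{equation}

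From here the conclusion is immediate: assuming $M^m$ is connected, a function with vanishing gradient is constant, so $\Delta\lambda + 2a\lambda + \frac{n}{2}|{\rm grad}\,\lambda|^{2} = C$ for some constant $C$, which is exactly \eqref{Einstein}. Conversely, applying ${\rm grad}$ to \eqref{Einstein} and using ${\rm grad}\,C = 0$ recovers the displayed specialization of \eqref{W-P} under the Einstein hypothesis, hence biharmonicity by Theorem \ref{MT2}.

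There is no serious obstacle here; the statement is a direct corollary of Theorem \ref{MT2}. The only points requiring a moment of care are the index-raising that converts the tensorial Einstein equation into the operator identity ${\rm Ricci}^M({\rm grad}\,\lambda) = a\,{\rm grad}\,\lambda$, and the (implicit) connectedness of $M^m$ needed to pass from a vanishing gradient to a genuine constant $C$. Both are standard, so the entire content of the statement is carried by Theorem \ref{MT2} together with the constancy of $a$.
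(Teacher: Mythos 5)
Your proof is correct and follows exactly the route the paper intends: specializing the criterion \eqref{W-P} of Theorem \ref{MT2} using ${\rm Ricci}^M({\rm grad}\,\lambda)=a\,{\rm grad}\,\lambda$, absorbing the constant $a$ into a gradient, and integrating the resulting exact equation to get \eqref{Einstein}. Your explicit attention to the index-raising step and to connectedness of $M^m$ is a small but welcome refinement of the standard argument.
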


\begin{remark}
(a) It was also proved in \cite{BMO} that the projection
\begin{align}\notag
\phi: ( M^{m} \times N^n , g_M +f^2 g_N) \longrightarrow (M^m , g_M),\;\;
\phi(x,y) =x
\end{align}
 is  proper biharmonic if and only if
\begin{equation}
{\rm Tr}_g\nabla ^2 {\rm grad}\, \ln \,f+{\rm Ricci}^{M}({\rm grad}\, \ln \,f)+\frac{n}{2}{\rm grad}\,|{\rm grad}\, \ln \,f|^2=0.
\end{equation}
(b)  Note that the Riemannian submersion given by the projection of a warped product has basic mean curvature vector field and a biharmonic equation in the form of (\ref{Bas}) was also obtained in \cite{Ur18}. 
\end{remark}

{\bf II. The case of 1-dimensional fibers:}  An important tool to study biharmonic Riemannian submersion $\varphi:( M^{n+1} , g)\to (N^n, h)$ is  the {\bf integrability data} of an adapted frame introduced in \cite{WO1}. For simplicity, we explain the idea in the case of $n=2$.

A local orthonormal frame $\{e_1,\; e_2, \;e_3\}$ is said to be {\bf adapted to
the Riemannian submersion} $\varphi:( M^3 , g)\to (N^2,h)$ if $\{e_1,\; e_2\}$  are basic (i.e.,
they are horizontal and are $\varphi$-related to a local orthonormal frame in the base
space), and $\{ e_3\}$ is vertical. The existence of an adapted frame is easily checked  (cf. e.g., \cite{BW}). For an adapted frame
$\{e_1,\; e_2, \;e_3\}$, we known (see \cite{O})
that $[e_1,e_3]$ and $ [e_2,e_3]$ are vertical and $[e_1,e_2]$ is
$\varphi$-related to $[\varepsilon_1, \varepsilon_2]$ for an orthonormal frame
$\{\varepsilon_1, \varepsilon_2\}$ on the base manifold. It follows that
\begin{equation}\label{RS22}
[\varepsilon_1,\varepsilon_2]=F_1\varepsilon_1+F_2\varepsilon_2,
\end{equation}
for some function $F_1, F_2\in C^{\infty}(N)$. Denoting
$f_i=F_i\circ \varphi, i=1, 2$, we have
\begin{equation}\label{R1}
\begin{cases}
[e_1,e_3]=\kappa_1e_3,\\
[e_2,e_3]=\kappa_2e_3,\\
[e_1,e_2]=f_1 e_1+f_2e_2-2\sigma e_3.
\end{cases}
\end{equation}
where $\kappa_1,\;\kappa_2\;{\rm and}\; \sigma \in C^{\infty}(M)$.
The set of 5 functions $\{ f_1, f_2, \kappa_1,\;\kappa_2\; \sigma\}$
 is called {\bf the integrability data} of the adapted frame of the Riemannian
submersion $\varphi$. Note that by using the integrability data the tensions field of the Riemannian submersion can be expressed as
\begin{align}
\tau( \varphi)=-\kappa_1\epsilon_1-\kappa_2\epsilon_2.
\end{align}
Thus, $\varphi$ is a harmonic Riemannian submersion if and only if $\kappa_1=\kappa_2\equiv 0$. Note also that the integrability data $\sigma\equiv 0$ if and only if the Riemannian submersion has integrable horizontal distribution.

By using the integrability data we can reduce the 4th order PDEs of biharmonic Riemannian submersions into a 2nd order system of PDEs.

\begin{theorem}\label{ID3}\cite{WO1}
A Riemannian submersion $\varphi:(M^3,g)\to (N^2,h)$ 
with an adapted frame $\{e_1,\; e_2, \;e_3\}$ and the integrability
data $ \{ f_1, f_2, \kappa_1,\;\kappa_2\; \sigma\}$ is biharmonic if and only if
\begin{equation}\label{RSB0}
\begin{cases}
-\Delta^{M}\kappa_1-f_1 e_1(\kappa_2)-e_1(\kappa_2 f_1)-f_2
e_2(\kappa_2)-e_2(\kappa_2 f_2)
\\+\kappa_1\kappa_2 f_1 +\kappa_2^2 f_2
+\kappa_1\{-K^{N}+f_{1}^{2}+f_{2}^{2}\}
=0,\\
-\Delta^{M}\kappa_2+f_1 e_1(\kappa_1)+e_1(\kappa_1 f_1)+f_2
e_2(\kappa_1)+e_2(\kappa_1 f_2)\\
-\kappa_1\kappa_2 f_2-\kappa_1^2f_1
+\kappa_2\{-K^{N}+f_{1}^{2}+f_{2}^{2}\} =0,
\end{cases}
\end{equation}
where
$K^{N}=R^{N}_{1212}\circ\varphi=-[e_2(f_1)-e_1(f_2)+f_{1}^{2}+f_{2}^{2}]
$ is the Gauss curvature of $(N^2,h)$.
\end{theorem}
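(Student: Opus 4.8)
The plan is to compute the bitension field directly from its definition \eqref{bihar}, using the adapted frame and the integrability data \eqref{R1}. Writing
\[
\tau_2(\varphi)={\rm Trace}_g(\nabla^{\varphi})^2\tau(\varphi)-{\rm Trace}_g\,R^N({\rm d}\varphi,\tau(\varphi)){\rm d}\varphi,
\]
the two scalar equations of \eqref{RSB0} will be exactly the $\varepsilon_1$- and $\varepsilon_2$-components of $\tau_2(\varphi)=0$. The data I would use throughout are that ${\rm d}\varphi(e_i)=\varepsilon_i$ for $i=1,2$, ${\rm d}\varphi(e_3)=0$, that the pullback connection satisfies $\nabla^{\varphi}_{e_a}\varepsilon_i=\nabla^N_{{\rm d}\varphi(e_a)}\varepsilon_i$, and that $\tau(\varphi)=-\kappa_1\varepsilon_1-\kappa_2\varepsilon_2$. (Equivalently one could specialise \eqref{EQ1} to $m=3$, $n=2$, but the direct route is cleaner here.)

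First I would record the two Levi--Civita connections from the brackets via the Koszul formula. On the surface $N$, the single relation \eqref{RS22} yields $\nabla^N_{\varepsilon_1}\varepsilon_1=-F_1\varepsilon_2$, $\nabla^N_{\varepsilon_1}\varepsilon_2=F_1\varepsilon_1$, $\nabla^N_{\varepsilon_2}\varepsilon_1=-F_2\varepsilon_2$, $\nabla^N_{\varepsilon_2}\varepsilon_2=F_2\varepsilon_1$. On $M$, the integrability data \eqref{R1} give the diagonal covariant derivatives $\nabla^M_{e_1}e_1=-f_1e_2$, $\nabla^M_{e_2}e_2=f_2e_1$ and $\nabla^M_{e_3}e_3=\kappa_1e_1+\kappa_2e_2$ (the mean curvature vector field $\mu$), all of which turn out to be horizontal. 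With these I would expand the rough Laplacian $\sum_{a=1}^{3}\big(\nabla^{\varphi}_{e_a}\nabla^{\varphi}_{e_a}-\nabla^{\varphi}_{\nabla^M_{e_a}e_a}\big)\tau(\varphi)$ into three groups according to how the two derivatives distribute over the coefficients $\kappa_i$ and the frame $\varepsilon_i$: the terms in which both derivatives fall on $\kappa_i$ assemble into $-\Delta^M\kappa_i$ (note that the vertical direction contributes the genuine second derivative $e_3e_3(\kappa_i)$, so the full Laplacian of $M$ appears); the terms with one derivative on each produce, through $\nabla^N\varepsilon_i$, the mixed quantities $f_je_a(\kappa_i)$; and the terms in which both derivatives act on $\varepsilon_i$ give $\kappa_i$ times the rough Laplacian of the pulled-back frame field, yielding the $\kappa_i(f_1^2+f_2^2)$ contributions from the horizontal directions together with the products $\kappa_1\kappa_2f_1$, $\kappa_2^2f_2$ (and their $\varepsilon_2$-analogues) coming from the vertical piece $\nabla^N_{{\rm d}\varphi(\mu)}\varepsilon_i$.

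The curvature term is the short part: since $N$ is two dimensional, $R^N(X,Y)Z=K^N(\langle Y,Z\rangle X-\langle X,Z\rangle Y)$, whence ${\rm Trace}_g\,R^N({\rm d}\varphi,\tau){\rm d}\varphi=\sum_{a=1}^{2}R^N(\varepsilon_a,\tau)\varepsilon_a=-K^N\tau(\varphi)$, which after the overall sign in $\tau_2(\varphi)$ supplies the terms $-K^N\kappa_1$ and $-K^N\kappa_2$, with $K^N$ the Gauss curvature recorded in the statement. Finally, taking the $\varepsilon_1$-component and combining the second-group contribution with the horizontal part of the third group through the product rule $e_a(\kappa_if_j)=f_je_a(\kappa_i)+\kappa_ie_a(f_j)$ collapses them into $-f_1e_1(\kappa_2)-e_1(\kappa_2f_1)-f_2e_2(\kappa_2)-e_2(\kappa_2f_2)$; together with $-\Delta^M\kappa_1$, the products $\kappa_1\kappa_2f_1+\kappa_2^2f_2$, and $\kappa_1\{-K^N+f_1^2+f_2^2\}$ this is exactly the first equation of \eqref{RSB0}, and the $\varepsilon_2$-component is obtained identically.

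The main obstacle I anticipate is the bookkeeping in the rough Laplacian rather than any conceptual difficulty: one must handle the non-commuting frame with care, retain the vertical $e_3$-derivatives of $\kappa_i$ (which genuinely contribute), and track the many cross terms without sign errors. A convenient internal consistency check is that the integrability datum $\sigma$ must disappear entirely from the final equations: $\sigma$ enters only through vertical components of the brackets and of the connection coefficients, and every such vertical contribution is annihilated by ${\rm d}\varphi$ in the relevant terms, so it cannot survive in \eqref{RSB0}. Once the $\varepsilon_1$-component is verified, interchanging the roles of the two horizontal directions gives the $\varepsilon_2$-component, completing the proof.
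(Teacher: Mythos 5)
The survey states Theorem \ref{ID3} without proof, citing \cite{WO1}, and your direct computation of $\tau_2(\varphi)$ in the adapted frame is essentially the argument of that original source (the survey's integrability-data machinery is set up precisely for it). Your ingredients all check out --- the connection coefficients $\nabla^M_{e_1}e_1=-f_1e_2$, $\nabla^M_{e_2}e_2=f_2e_1$, $\nabla^M_{e_3}e_3=\kappa_1e_1+\kappa_2e_2$ and their analogues on $N$, the assembly of $-\Delta^{M}\kappa_i$ (correctly including the vertical contribution $e_3e_3(\kappa_i)$ and the first-order terms $(f_2+\kappa_1)e_1(\kappa_i)+(\kappa_2-f_1)e_2(\kappa_i)$ coming from the correction terms $-\nabla^{\varphi}_{\nabla^M_{e_a}e_a}\tau$), the cancellation of $\sigma$, and the reduction ${\rm Trace}_g\,R^N({\rm d}\varphi,\tau){\rm d}\varphi=-K^N\tau(\varphi)$ --- and collecting the $\varepsilon_1$- and $\varepsilon_2$-components reproduces (\ref{RSB0}) exactly.
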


Similarly, a local orthonormal frame $\{e_1,\ldots,e_n, e_{n+1}\}$ is adapted
to the  Riemannian submersion $\varphi:( M^{n+1} , g)\to (N^n, h)$ if $\{e_1,\ldots,e_n\}$  are horizontal and basic, and $e_{n+1}$ is vertical. It follows from
\cite{O} that $[e_i,e_{n+1}]$ $(i=1, 2, \ldots, n)$ are vertical
and $[e_i,e_j]$ $(i, j=1, 2, \ldots, n)$ are $\varphi$-related to
$[\varepsilon_i, \varepsilon_j]$ for an orthonormal frame $\{\varepsilon_i, \ldots,
\varepsilon_n\}$ on $(N^n, h)$. By setting
\begin{equation}\label{RS22}
[\varepsilon_i,\varepsilon_j]=F_{ij}^k\varepsilon_k,
\end{equation}
for $F^k_{ij}\in C^{\infty}(N)$ and introducing
$f^k_{ij}=F^k_{ij}\circ \varphi,\;\forall\; i, j, k=1, 2, \ldots, n$, we have
\begin{equation}\label{LB}
\begin{cases}
[e_i,e_{n+1}]=\kappa_ie_{n+1},\\
[e_i,e_j]=f^k_{ij} e_k-2\sigma_{ij}e_{n+1},\;\;\; i, j=1, 2, \ldots,
n,
\end{cases}
\end{equation}
where $\kappa_i,\,{\rm and}\; \sigma_{ij} \in C^{\infty}(M)$. The set of functions
 $ \{f^k_{ij},  \kappa_i,\,\sigma_{ij}\}$ is called {\bf the
integrability data} of the adapted frame of the Riemannian
submersion $\varphi$. By (\ref{LB}) one can easily check that
\begin{eqnarray}
f_{ij}^k=-f_{ji}^k\; \;{\rm and}\;\sigma_{ij}=-\sigma_{ji},\;\;\forall\; i,j, k=1,2, \cdots, n.
\end{eqnarray}

Note that the integrability data of a Riemannian submersion $\varphi:( M^{n+1} , g)\to (N^n, h)$ contains the following geometric information of the spaces and the harmonicity of the Riemannian submersion.

\begin{corollary}
For a Riemannian submersion $\varphi:(M^{n+1},g)\to (N^n,h)$  with an adapted frame $\{e_1,\ldots, e_{n+1}\}$ and the assciated
integrability data $ \{f^k_{ij}, \kappa_i, \sigma_{ij}\}$ defined on an open set $U\subseteq M$, we have\\
(1) If $f^k_{ij}\equiv 0,\;\forall\; i,j, k$, then $\varphi (U)$ is locally flat;\\
(2)  $ \sigma_{ij}\equiv 0,\;\forall\; i,j$ if and only if the horizontal distribution on $U$ is integrable;\\
(3) $ \kappa_i\equiv 0\;\forall\; i$ if and only if the Riemannian submersion is harmonic.
\end{corollary}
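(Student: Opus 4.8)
The plan is to prove the three assertions separately, noting at the outset that (2) is an immediate application of the Frobenius theorem, that (1) reduces to a computation on the base via the Koszul formula, and that (3) is the only statement requiring the tension field to be re-expressed through the integrability data. For (2), I would observe that the horizontal distribution on $U$ is spanned by $\{e_1,\ldots,e_n\}$, so by the Frobenius theorem it is integrable precisely when each bracket $[e_i,e_j]$ is again horizontal. The second line of the integrability data (\ref{LB}) decomposes $[e_i,e_j]=f^k_{ij}e_k-2\sigma_{ij}e_{n+1}$ into its horizontal part $f^k_{ij}e_k$ and its vertical part $-2\sigma_{ij}e_{n+1}$; the latter vanishes for all $i,j$ if and only if $\sigma_{ij}\equiv 0$, which is exactly the claim.

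For (1), since $f^k_{ij}=F^k_{ij}\circ\varphi$ and $\varphi$ maps $U$ onto the open set $\varphi(U)\subseteq N$, the hypothesis $f^k_{ij}\equiv 0$ forces $F^k_{ij}\equiv 0$ on $\varphi(U)$, i.e. the orthonormal frame $\{\varepsilon_i\}$ consists of pairwise commuting fields there. Because the frame is orthonormal, all metric coefficients $h(\varepsilon_a,\varepsilon_b)=\delta_{ab}$ are constant, so the Koszul formula collapses to the bracket terms alone and yields $\nabla^N_{\varepsilon_i}\varepsilon_j=0$ for all $i,j$. Substituting this into the definition of the curvature operator gives $R^N(\varepsilon_i,\varepsilon_j)\varepsilon_k=0$, so $\varphi(U)$ is flat.

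For (3), the key is to compute the mean curvature vector $\mu$ of the $1$-dimensional fibers in terms of the $\kappa_i$. Using $m-n=1$, formula (\ref{T}) reads $\tau(\varphi)=-d\varphi(\mu)$ with $\mu=(\nabla^M_{e_{n+1}}e_{n+1})^{\mathcal{H}}$. I would recover its horizontal components by metric compatibility: from $\langle e_{n+1},e_i\rangle=0$ one gets $\langle\nabla^M_{e_{n+1}}e_{n+1},e_i\rangle=-\langle e_{n+1},\nabla^M_{e_{n+1}}e_i\rangle$, and then rewrite $\nabla^M_{e_{n+1}}e_i=\nabla^M_{e_i}e_{n+1}-[e_i,e_{n+1}]$ using torsion-freeness together with the first line of (\ref{LB}). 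Since $\langle e_{n+1},\nabla^M_{e_i}e_{n+1}\rangle=\tfrac12 e_i|e_{n+1}|^2=0$ and $[e_i,e_{n+1}]=\kappa_ie_{n+1}$, this leaves $\langle\nabla^M_{e_{n+1}}e_{n+1},e_i\rangle=\kappa_i$, hence $\mu=\sum_i\kappa_ie_i$. As each $e_i$ is basic and $\varphi$-related to $\varepsilon_i$, I obtain $\tau(\varphi)=-\sum_i\kappa_i\varepsilon_i$; since $\{\varepsilon_i\}$ is linearly independent, $\tau(\varphi)=0$ if and only if every $\kappa_i$ vanishes, which is precisely harmonicity.

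The only genuinely computational step, and the one I expect to be the main obstacle, is the identification $\mu=\sum_i\kappa_ie_i$ in part (3), where metric compatibility, torsion-freeness, and the bracket relations (\ref{LB}) must be used simultaneously and correctly. Parts (1) and (2) are then formal consequences of the Koszul formula and the Frobenius theorem, respectively.
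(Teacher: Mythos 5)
Your proposal is correct and follows essentially the same route as the paper: vanishing brackets of the frame $\{\varepsilon_i\}$ forcing zero curvature for (1), the involutivity criterion for the horizontal distribution for (2) (the paper phrases this via O'Neill's $A$-tensor $A_XY=\tfrac12[X,Y]^{\mathcal{V}}$, which is equivalent to your direct Frobenius argument), and the tension-field formula $\tau(\varphi)=-\sum_{i}\kappa_i\varepsilon_i$ for (3). The only difference is one of self-containedness: you derive the ingredients the paper merely quotes — filling in the Koszul-formula step behind the paper's ``easily checked'' curvature claim, and computing $\mu=\sum_i\kappa_ie_i$ from metric compatibility, torsion-freeness and (\ref{LB}) rather than citing the displayed tension-field formula — and both computations are carried out correctly.
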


\begin{proof}
For (1), by definition, $f^k_{ij}=F^k_{ij}\circ \varphi$. It follows that if $f^k_{ij}\equiv 0\;\forall\; i,j, k$, then $[\varepsilon_i,\varepsilon_j]=F_{ij}^k\varepsilon_k\equiv 0$ for an orthonormal frame $\{\varepsilon_j\}$ on $\varphi(U)$. It is easily check that in this case the Riemann curvature tensor vanishes identically on $\varphi(U)$ and hence a classical result in differential geometry implies that $\varphi(U)$ is locally flat. For (2), recall that the horizontal distribution of a Riemannian submersion in integrable if and only if its A tensor  $A_X Y=\frac{1}{2}[X, Y]^{\mathcal{V}}$ vanishes identically for any horizontal vector fields $X, Y$ (see \cite{O}). This and (\ref{LB}) gives (2). Finally, (3) follows from the tension field formula
\begin{align}
\tau( \varphi)=-\sum_{i=1}^n\kappa_i\epsilon_i.
\end{align}
\end{proof}

Similarly, the biharmonicity of a Riemannian submersion can also be described by the integrability data as

\begin{theorem}\cite{AO}\label{1D}
A Riemannian submersion $\varphi:(M^{n+1},g)\to (N^n,h)$  with an adapted frame $\{e_1,\ldots, e_{n+1}\}$ and the
integrability data $ \{f^k_{ij}, \kappa_i, \sigma_{ij}\}$ is biharmonic if and only if
\begin{eqnarray}\label{1DE}
&&\Delta\kappa_k+\sum^{n}_{i,j=1}\Big(2e_i(\kappa_j)P^{k}_{ij}+\kappa_j\big[ e_i (P^{k}_{ij})+P^{l}_{ij}P_{il}^k -\kappa_iP_{ij}^k - P_{ii}^lP_{lj}^k\big] \Big)\\\notag
&&+{\rm Ricci}^N(d\varphi(\mu), d\varphi(e_k))=0, \;\;k=1,2,\ldots, n,
\end{eqnarray}
where $P_{ij}^k=\frac{1}{2}(-f_{ik}^j-f_{jk}^i +f_{ij}^k)$ for all $ i,j, k=1,2, \cdots, n$ and $\Delta$ denotes the Laplacian of $(M^{n+1}, g)$.
\end{theorem}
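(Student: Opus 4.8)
The plan is to specialize the general bitension field formula (\ref{EQ1}) of Theorem \ref{MT} to the case $m=n+1$ and then translate every term into the integrability data (\ref{LB}). Since $m-n=1$, there is a single vertical field $e_{n+1}$, and comparing $\tau(\varphi)=-(m-n)\,d\varphi(\mu)=-d\varphi(\mu)$ with $\tau(\varphi)=-\sum_{i}\kappa_i\varepsilon_i$, using that $d\varphi$ restricts to an isometry on the horizontal space with $d\varphi(e_i)=\varepsilon_i$, one identifies the (horizontal) mean curvature field as $\mu=\sum_{k=1}^n\kappa_k e_k$. This is the expression I would feed into (\ref{EQ1}).

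First I would compute all Levi--Civita connection coefficients of $g$ in the adapted frame. Because $\{e_1,\dots,e_{n+1}\}$ is orthonormal, the three metric-derivative terms in the Koszul formula drop out, so $2g(\nabla^M_{e_a}e_b,e_c)=g([e_a,e_b],e_c)-g([e_b,e_c],e_a)+g([e_c,e_a],e_b)$, and substituting (\ref{LB}) gives closed expressions: for horizontal triples $g(\nabla^M_{e_i}e_j,e_k)=\tfrac12(-f^{\,j}_{ik}-f^{\,i}_{jk}+f^{\,k}_{ij})=P^k_{ij}$, which is exactly the quantity in the statement, while the coefficients involving $e_{n+1}$ reduce to $g(\nabla^M_{e_i}e_j,e_{n+1})=-\sigma_{ij}$, $g(\nabla^M_{e_i}e_{n+1},e_j)=g(\nabla^M_{e_{n+1}}e_i,e_j)=\sigma_{ij}$, $g(\nabla^M_{e_{n+1}}e_{n+1},e_j)=\kappa_j$, and $g(\nabla^M_{e_{n+1}}e_{n+1},e_{n+1})=0$. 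The last identity shows $(\nabla^M_{e_{n+1}}e_{n+1})^{\mathcal V}=0$, so the term $[\mu,(\nabla^M_{e_{n+1}}e_{n+1})^{\mathcal V}]$ in (\ref{EQ1}) vanishes, and it re-confirms $\mu=\sum_k\kappa_k e_k$.

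Next I would substitute $\mu=\sum_k\kappa_k e_k$ and these coefficients into (\ref{EQ1}). The crucial simplification is that the whole bracketed vector field sits inside $d\varphi$, which annihilates $e_{n+1}$ and sends $e_k\mapsto\varepsilon_k$; hence only the horizontal $e_k$-components survive and every vertical contribution may be discarded along the way. I would then read off the $\varepsilon_k$-coefficient termwise: the second-order operator $\sum_{i=1}^n\{\nabla^M_{e_i}(\nabla^M_{e_i}\mu)^{\mathcal H}-\nabla^M_{(\nabla^M_{e_i}e_i)^{\mathcal H}}\mu\}$ together with the vertical double bracket $[[\mu,e_{n+1}],e_{n+1}]$ assembles into the scalar Laplacian $\Delta\kappa_k$ on $(M^{n+1},g)$ (a direct expansion gives that the horizontal part of $[[\mu,e_{n+1}],e_{n+1}]$ is $\sum_k e_{n+1}e_{n+1}(\kappa_k)\,e_k$, supplying exactly the vertical-direction piece of $\Delta$ missing from the horizontal index sum) plus first-order terms; the remaining bracket corrections $[\mu,(\nabla^M_{e_i}e_i)^{\mathcal V}]$ and the term $-(m-n)\nabla^M_\mu\mu=-\nabla^M_\mu\mu$ produce the first- and zeroth-order contributions with coefficients $e_i(\kappa_j)P^k_{ij}$, $\kappa_j e_i(P^k_{ij})$, $\kappa_jP^l_{ij}P^k_{il}$, $-\kappa_i\kappa_j P^k_{ij}$ and $-\kappa_jP^l_{ii}P^k_{lj}$. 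Adding the curvature term $-{\rm Ricci}^N(d\varphi(\mu))$, whose $\varepsilon_k$-component is ${\rm Ricci}^N(d\varphi(\mu),d\varphi(e_k))$, and setting $\tau_2(\varphi)=0$ componentwise yields (\ref{1DE}).

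The main obstacle is the second-order step: reassembling the disparate pieces into the single clean scalar Laplacian $\Delta\kappa_k$ on $M$. One must track the interior horizontal projections $(\cdot)^{\mathcal H}$ carefully, and recognize that the sum $\sum_{i=1}^n$ in (\ref{EQ1}) runs only over horizontal indices, so the vertical second derivative $e_{n+1}e_{n+1}(\kappa_k)$ and the correction $\nabla^M_{\nabla^M_{e_{n+1}}e_{n+1}}$ must come entirely from the $s=n+1$ bracket term $[[\mu,e_{n+1}],e_{n+1}]$; matching these requires the connection identities above. A secondary but delicate bookkeeping point is verifying that all $\sigma_{ij}$ (horizontal-distribution twisting) contributions, which appear transiently in the vertical components and in cross terms, cancel out of the final horizontal $\varepsilon_k$-coefficients, so that the biharmonic equation depends on the integrability data only through the $\kappa_i$ and $P^k_{ij}$ together with the base curvature. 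Careful sign and index bookkeeping throughout is what makes the computation lengthy rather than conceptually hard.
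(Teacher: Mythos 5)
Your proposal is correct and follows essentially the same route that the paper sets up for this result: specialize the general bitension formula (\ref{EQ1}) of Theorem \ref{MT} to $m=n+1$, identify $\mu=\sum_k\kappa_k e_k$, compute the Levi--Civita coefficients $P^k_{ij}$, $\pm\sigma_{ij}$, $\kappa_j$ from the integrability data via the Koszul formula, and collect the surviving horizontal $\varepsilon_k$-components to obtain (\ref{1DE}). Two harmless bookkeeping slips worth flagging: the terms $[\mu,(\nabla^M_{e_i}e_i)^{\mathcal V}]$ vanish identically (since $\sigma_{ii}=0$) rather than contributing lower-order terms, and the Laplacian correction $-(\nabla^M_{e_{n+1}}e_{n+1})(\kappa_k)=-\mu(\kappa_k)$ is supplied by $-\nabla^M_{\mu}\mu$ (because $\nabla^M_{e_{n+1}}e_{n+1}=\mu$), not by the double bracket $[[\mu,e_{n+1}],e_{n+1}]$, which contributes only $e_{n+1}(e_{n+1}(\kappa_k))$.
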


\section{Some recent work on biharmonic Riemannian submersions}

An important problem in the study of biharmonic submanifolds is to classify biharmonic submanifolds in some well known model spaces like space forms or more general homogeneous spaces.  For example, let $M^3(c)= \r^3, H^3, S^3$ denote  the {\bf 3-dimensional space forms}, i.e.,  Euclidean, hyperbolic, or spherical space of constant sectional curvature $K=0, -1, 1$ respectively, then the following classification is well known.\\
{\bf Theorem A.} \cite{CI, Ji87, CMO} $\varphi: (N^2, h)\to M^3(c)$ is a proper biharmonic isometric immersion if and only if, up to a homothety, $\varphi$ is the inclusion $i:S^2\to S^3, i(x)=(x, 1)/\sqrt{2}$ or its restriction to an open set of $S^2$.\\

In this section, we review the recent work \cite{WO1, WO2, WO3, WO4, WO5} which give some classifications of proper biharmonic Riemannian submersions from 3-dimensional manifolds including Thurston's  eight models  of 3-dimensional geometries, BCV spaces, and the product manifold $M^2\times \r$. 

Recall that {\bf a BCV space} is a 3-dimensional Bianchi-Cartan-Vranceanu space defined by the 2-parameter family of Riemannian manifolds
$$M^3_{m,\;l}=\left(\r^{3},g=\frac{dx^2+dy^2}{[1+m(x^2+y^2)]^2}+[dz+\frac{l}{2}\frac{y
dx-x dy}{1+m(x^2+y^2)}]^2\right).$$
 With different choices of the constants $m, l$, this family gives  7  well-known model spaces $\r^3,\; S^3,\; S^2\times\r,\; H^2\times\r,\; \widetilde{SL}(2,\r), \;Nil, \; SU(2)$. \\
 
Recall also that {\bf Thurston's  8 models of 3-dimensional geometries} refers to: $\r^3,\;H^3,\; S^3,\; S^2\times\r,\; H^2\times\r,\; \widetilde{SL}(2,\r), \;Nil, \; Sol$. So the family of BCV spaces  contains six of  the Thurston's eight models for 3-dimensional geometries except the hyperbolic space $H^3$ and the Sol space.
 \\

\noindent {\bf 3.1 Biharmonic Riemannian submersions from space forms}\\
The first classification of proper biharmonic Riemannian submersions shows  a contrast to that of proper biharmonic isometric immersions given in Theorem A.
\begin{theorem}\cite{WO1}\label{Th0}
There is NO proper biharmonic Riemannian submersion\\ $\varphi:M^3(c)\to (N^2,h)$ 
from a space form no matter what the target surface is.
\end{theorem}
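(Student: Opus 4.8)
The plan is to exploit the dimension reduction already established in Theorem~\ref{ID3}, which recasts the biharmonicity of a Riemannian submersion $\varphi:(M^3,g)\to(N^2,h)$ as the second-order system (\ref{RSB0}) in the integrability data $\{f_1,f_2,\kappa_1,\kappa_2,\sigma\}$. Since the domain $M^3(c)$ is a space form, its full curvature tensor is constrained, and I would first translate the constant-curvature hypothesis $K=c$ into algebraic and differential relations among the integrability data. Concretely, I would compute the sectional curvatures of $M^3(c)$ in the adapted frame $\{e_1,e_2,e_3\}$ using the structure equations (\ref{R1}); each independent component of $R^M$ must equal the appropriate combination of $c$, so this yields a family of constraint equations involving $f_1,f_2,\kappa_1,\kappa_2,\sigma$ and their frame derivatives. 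These curvature constraints are the crucial extra input that the pure biharmonic system (\ref{RSB0}) does not by itself carry.

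The heart of the argument is then a combined analysis: assume $\varphi$ is biharmonic (so (\ref{RSB0}) holds) and proper (so $(\kappa_1,\kappa_2)\not\equiv 0$), and derive a contradiction from the curvature constraints. A natural route is to form the quantity $\kappa_1\cdot(\text{first equation of (\ref{RSB0})})+\kappa_2\cdot(\text{second equation})$, which should produce the Laplacian of $|\tau(\varphi)|^2=\kappa_1^2+\kappa_2^2$ together with curvature terms; integrating or applying a maximum-principle-type argument where $|\tau(\varphi)|^2$ attains an extremum would then force the curvature terms to have a definite sign incompatible with $c$ constant. Alternatively, since the equivalent intrinsic form (\ref{Bas}) is available, I would consider testing biharmonicity against $\tau(\varphi)$ itself: pairing (\ref{Bas}) with $\tau(\varphi)$ gives
\begin{equation}\notag
\tfrac{1}{2}\Delta^N|\tau(\varphi)|^2 - |\nabla^N\tau(\varphi)|^2 + \tfrac{1}{2}\tau(\varphi)\big(|\tau(\varphi)|^2\big) + \mathrm{Ricci}^N(\tau(\varphi),\tau(\varphi))=0,
\end{equation}
and the space-form constraint on the domain controls the sign of the curvature contribution, again yielding a contradiction unless $\tau(\varphi)\equiv 0$.

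I would organize the casework by whether the horizontal distribution is integrable, i.e.\ whether $\sigma\equiv 0$, since $\sigma$ appears in the curvature relations and its vanishing simplifies the structure equations considerably; the non-integrable case $\sigma\neq 0$ is where the geometry of the domain is most rigid and where the constant-curvature condition bites hardest. The main obstacle I anticipate is \emph{not} the biharmonic system itself but the bookkeeping of the curvature constraints: expressing all components of $R^M=c\,(g\owedge g)$ in the adapted frame produces several coupled equations (for instance relating $e_i(\kappa_j)$, $e_i(f_j)$, and products like $\kappa_i\kappa_j$, $f_if_j$, $\sigma^2$), and the delicate part is selecting the right linear combination of these together with (\ref{RSB0}) so that all derivative terms cancel and one is left with a purely algebraic inequality forcing $\kappa_1=\kappa_2\equiv 0$. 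Keeping the frame non-holonomy terms straight while avoiding an unmanageable explosion of auxiliary functions is the real difficulty, and I expect the cleanest path is to work with the intrinsic form (\ref{Bas}) on the base surface $N^2$, where only the Gauss curvature $K^N$ and the single tension vector $\tau(\varphi)$ appear, and to feed in the domain curvature constraint through the relation between $K^N$, the $f_i$, and $c$.
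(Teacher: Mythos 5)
Your framework is the right one, and it matches the method of the cited source \cite{WO1} (note the survey itself states this theorem without proof): take an adapted frame, express biharmonicity as the second-order system (\ref{RSB0}) of Theorem \ref{ID3}, and impose the constant-curvature hypothesis as first-order constraints on the integrability data $\{f_1,f_2,\kappa_1,\kappa_2,\sigma\}$. But the step you call the heart of the argument --- extracting a contradiction by a sign, maximum-principle, or integration argument --- is exactly the step that cannot work, for three reasons. First, there is a conceptual obstruction: proper biharmonic Riemannian submersions onto \emph{flat} surfaces do exist (Examples (a)--(d) of this survey, from $H^2\times\r$ and $\widetilde{SL}(2,\r)$), and for those maps your paired identity is satisfied with $K^N\equiv 0$ and $\tau(\varphi)\not\equiv 0$. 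Hence no argument whose only curvature input is the sign of ${\rm Ricci}^N(\tau,\tau)$ (equivalently of $-K^N+f_1^2+f_2^2$) can distinguish a space-form domain from $H^2\times\r$; the domain curvature must enter through the full set of constraint equations, not merely through a sign. Second, the signs in fact point the wrong way: by O'Neill's formula $K^N\circ\varphi=c+3\sigma^2$, so for $c\ge 0$ one has ${\rm Ricci}^N(\tau,\tau)=K^N|\tau|^2\ge 0$, and at an interior maximum of $|\tau|^2$ your identity only yields $|\nabla^N\tau|^2\le K^N|\tau|^2$ --- no contradiction; a Jiang-type vanishing argument needs $K^N\le 0$, which fails precisely here, while for $c=-1$ the sign of $K^N$ is not controlled at all. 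Third, the analytic prerequisites are absent: $\r^3$, $H^3$ and the base $N^2$ may be noncompact, so $|\tau|^2$ need not attain a maximum and integration by parts is unavailable; the one compact case, $S^3$, is exactly the case $K^N\ge 1>0$ where the maximum principle gives nothing. A further, smaller gap: equation (\ref{Bas}) is only valid when the mean curvature vector (equivalently the tension field) is basic, which is not known a priori; the general tool is (\ref{RSB0}), in which the $\kappa_i$ are functions on $M$, not on $N$.

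What the proof in \cite{WO1} actually does is purely local and computational, with no global or sign-based input: one writes out \emph{all} components of the space-form curvature identity in the adapted frame --- not only the horizontal sectional curvature, but the mixed components $\langle R^M(e_1,e_3)e_3,e_2\rangle=0$, $\langle R^M(e_1,e_2)e_i,e_3\rangle=0$, etc. --- obtaining first-order expressions for quantities such as $e_i(\kappa_j)$ and $e_3(\sigma)$ in terms of $c$ and products of the integrability data, together with the Jacobi-identity relation among $e_1(\kappa_2)$, $e_2(\kappa_1)$ and $e_3(\sigma)$. Substituting these into (\ref{RSB0}) and performing a case analysis, one finds the combined algebraic-differential system forces $\kappa_1=\kappa_2\equiv 0$ on every open set, i.e.\ the submersion is harmonic. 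In other words, the "bookkeeping of the curvature constraints" that your last paragraph defers is not a technical afterthought --- it \emph{is} the proof --- and the analytic shortcut you propose in its place is not viable.
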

\noindent {\bf 3.2 Biharmonic Riemannian submersions from  BCV spaces}

\begin{theorem}\label{Th1}\cite{WO2}
A proper biharmonic Riemannian submersion $\pi:M^3_{m,\;l}\to (N^2,h)$
from a BCV 3-space  exists only  in the cases: $H^2(4m)\times\r
\to \r^2$,\; or\\ $\widetilde{SL}(2,\r)\to\r^2$.
\end{theorem}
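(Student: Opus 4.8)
The goal is to classify which BCV 3-spaces $M^3_{m,l}$ admit a proper biharmonic Riemannian submersion onto some surface $(N^2,h)$. The natural strategy is to apply the integrability-data criterion of Theorem \ref{ID3} to the projection $\pi(x,y,z)=(y,z)$ (or the analogous coordinate projection adapted to the BCV metric), reduce the biharmonic system \eqref{RSB0} to explicit ODE/PDE constraints on the parameters $m,l$, and then solve. First I would compute an adapted orthonormal frame $\{e_1,e_2,e_3\}$ for the BCV metric, taking $e_3$ vertical and $\{e_1,e_2\}$ horizontal and basic, and extract the five integrability-data functions $\{f_1,f_2,\kappa_1,\kappa_2,\sigma\}$ directly from the Lie brackets \eqref{R1}. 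Because the BCV metric is highly symmetric, these functions should come out as explicit expressions in $m,l$ and the coordinates; the quantity $2\sigma$ measures the non-integrability of the horizontal distribution and is controlled by the parameter $l$.

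\textbf{Key steps.} Once the integrability data is in hand, I would substitute into the biharmonic system \eqref{RSB0}. Since we seek \emph{proper} biharmonic submersions, at least one of $\kappa_1,\kappa_2$ must be nonzero (by the tension-field formula $\tau(\varphi)=-\kappa_1\epsilon_1-\kappa_2\epsilon_2$). The two equations of \eqref{RSB0}, together with the Gauss-curvature expression $K^N=-[e_2(f_1)-e_1(f_2)+f_1^2+f_2^2]$, become algebraic/differential constraints relating $\kappa_1,\kappa_2,f_1,f_2$ to the fixed constants $m,l$. The plan is to show that these constraints force $m\le 0$ and a specific sign relation between $m$ and $l$, eliminating the compact cases $S^3$ and $SU(2)$ as well as the cases $\r^3$, $S^2\times\r$, and $Nil$, while leaving exactly $H^2(4m)\times\r\to\r^2$ and $\widetilde{SL}(2,\r)\to\r^2$ as the survivors. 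Each surviving case is then verified by exhibiting the explicit submersion (consistent with Example parts (c) and (d) above) and checking \eqref{RSB0} directly.

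\textbf{Main obstacle.} The hard part will be organizing the case analysis so that the seven BCV models are handled uniformly rather than one at a time. The difficulty is that the horizontal distribution of a generic BCV space is \emph{not} integrable ($\sigma\neq0$ when $l\neq0$), so Theorem \ref{ID3} does not immediately apply in its stated form — one must either work with a submersion whose fibers align with the Killing direction making $\sigma$ manageable, or generalize the derivation of \eqref{RSB0} to accommodate nonzero $\sigma$. I expect the crucial algebraic step is showing that the coupled system \eqref{RSB0} is overdetermined: after imposing that $\{f_1,f_2\}$ and $K^N$ be \emph{basic} (i.e., pulled back from $N$), the remaining vertical-derivative conditions on $\kappa_1,\kappa_2$ rigidly constrain $(m,l)$. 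Verifying that no compact BCV space survives will likely rest on a maximum-principle or integral argument: on a closed manifold a nontrivial solution to the reduced biharmonic equation cannot exist, which is precisely what rules out $S^3$ and $SU(2)$ and pins the answer down to the two noncompact homogeneous models listed.
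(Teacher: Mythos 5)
Your proposal has a structural gap that would prevent it from proving the theorem as stated. The theorem is a classification over \emph{all} Riemannian submersions $\pi:M^3_{m,l}\to (N^2,h)$ and \emph{all} target surfaces, but you begin by fixing the coordinate projection $\pi(x,y,z)=(y,z)$. A general Riemannian submersion from a BCV space need not have fibers tangent to $\partial_z$ or to any coordinate direction, so computing the integrability data of one candidate map can only produce examples (the existence half, which is anyway covered by Examples (c) and (d)); it cannot rule anything out. The core of the proof has to set up an \emph{arbitrary} submersion: take a unit vertical field $e_3$, express it relative to the canonical BCV frame (e.g.\ via the angle between $e_3$ and the Killing field $\partial_z$), extract the integrability data $\{f_1,f_2,\kappa_1,\kappa_2,\sigma\}$ as functions of that angle and of $m,l$, and show that the system (\ref{RSB0}) together with properness ($\kappa_1^2+\kappa_2^2\not\equiv 0$) forces $m<0$ and forces $(N^2,h)$ to be flat — note that the flatness of the target is part of the conclusion ($H^2(4m)\times\r\to\r^2$, $\widetilde{SL}(2,\r)\to\r^2$) and your plan never addresses it. This general setup is precisely the step you defer, and it is where all the work lies.

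Two further points would fail as written. First, the obstacle you single out is not an obstacle: Theorem \ref{ID3} is stated for an arbitrary adapted frame with arbitrary integrability data, and $\sigma$ simply does not appear in (\ref{RSB0}); no generalization of that theorem is needed to handle non-integrable horizontal distributions, so the BCV spaces with $l\neq 0$ are covered as is. Second, your proposed maximum-principle/integral argument on a closed manifold cannot rule out $S^3$ and $SU(2)$: the BCV models $M^3_{m,l}$ are by definition metrics on (all or part of) the non-compact space $\r^3$, the problem is local, and neither the domain patch nor the unspecified target $(N^2,h)$ is assumed compact. The exclusion of $\r^3$, $S^3$, $S^2\times\r$, $Nil$ and $SU(2)$ has to come out of the same pointwise PDE analysis of (\ref{RSB0}) that identifies the two surviving models (with $\r^3$ and $S^3$ alternatively available for free from Theorem \ref{Th0}, since they are space forms), not from a global compactness argument.
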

\begin{corollary}\cite{WO2}
There is NO proper biharmonic Riemannian submersion from
$\r^3,\; S^3,\; S^2\times\r,\;Nil, \; SU(2)$ onto a surface
\end{corollary}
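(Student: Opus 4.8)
The plan is to read this off directly from Theorem \ref{Th1}, which already classifies \emph{all} proper biharmonic Riemannian submersions from the BCV family $M^3_{m,l}$ onto surfaces. Since the corollary concerns five specific model spaces, the only thing to do is to realize each of them as a member $M^3_{m,l}$ for a suitable parameter pair $(m,l)$ and then observe that none of these pairs falls into the two exceptional classes singled out by Theorem \ref{Th1}.

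First I would record the parameter dictionary identifying the listed spaces inside the BCV family: the flat space $\r^3$ corresponds to $m=l=0$; the product $S^2\times\r$ to $m>0,\ l=0$; the Heisenberg (Nil) space to $m=0,\ l\neq 0$; the round sphere $S^3$ to the homothety class $l^2=4m>0$; and $SU(2)$ (the Berger-type spheres) to the remaining range $m>0,\ l\neq 0$ with $l^2\neq 4m$. These exhaust exactly the BCV parameter values complementary to the two admissible cases $m<0,\ l=0$ (giving $H^2(4m)\times\r$) and $m<0,\ l\neq 0$ (giving $\widetilde{SL}(2,\r)$). Then I would invoke Theorem \ref{Th1}: a proper biharmonic Riemannian submersion $\pi:M^3_{m,l}\to (N^2,h)$ onto a surface exists only when $M^3_{m,l}$ is $H^2(4m)\times\r$ or $\widetilde{SL}(2,\r)$, mapped to $\r^2$. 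Since each of $\r^3,\ S^3,\ S^2\times\r,\ Nil,\ SU(2)$ lies outside these two classes by the dictionary above, no proper biharmonic Riemannian submersion from any of them onto any surface can exist, which is precisely the assertion.

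Because this is a corollary of an already-stated classification, there is no genuine analytic obstacle; the entire content is bookkeeping of the BCV parameters $(m,l)$. The single point that needs care is confirming that the five named spaces really do cover \emph{all} BCV parameter values outside the two admissible ranges, so that Theorem \ref{Th1} leaves nothing omitted. In particular one must check that $S^2\times\r$, $S^3$, and $SU(2)$ together account for the full region $m>0$ (the isometry type being governed by whether $l=0$, $l^2=4m$, or otherwise), and that $Nil$ and $\r^3$ exhaust the line $m=0$; granting this, the conclusion is immediate.
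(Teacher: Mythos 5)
Your proposal is correct and matches the paper's own (implicit) derivation: the corollary is read off directly from Theorem \ref{Th1}, since each of the five listed spaces is a member of the BCV family for parameter values outside the two admissible cases $H^2(4m)\times\r$ and $\widetilde{SL}(2,\r)$. Your parameter dictionary for $(m,l)$ is the standard one and correctly accounts for all seven model spaces, so the bookkeeping step you flag as the only point of care is indeed handled.
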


Note that Theorem \ref{Th1} tells us that among all 7 model spaces in the BCV space family only two: $H\times \r$ and $\widetilde{SL}(2,\r)$ admit proper Riemannian submersions onto a surface which has to be Euclidean plane $\r^2$. In each of these cases many examples of proper biharmonic Riemannian submersions are also constructed in \cite{WO2}. The following uniqueness theorem was proved later in \cite{WO3}.

\begin{proposition}\label{Pr3}\cite{WO3}
A Riemannian submersion  $\pi:H^2\times\r\to (N^2,h)$ is proper biharmonic if and only if  $(N^2,h)$ is flat, and, up to an isometry, the map can be expressed as $\varphi: H^2\times\r\to\r^2$ with
$\varphi:(\r^{3},e^{2y}dx^2+dy^2+dz^2)\to (\r^2 ,dy^2 + dz^2),\; \varphi(x,y,z) =(y,z)$, a projection of a warped product.
\end{proposition}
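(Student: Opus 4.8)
The plan is to prove the two implications separately; the flatness of the target and the mere existence of such a submersion come for free from Theorem \ref{Th1}, so the real substance lies in the rigidity of the ``only if'' direction.

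For the ``if'' direction I would verify directly that the displayed $\varphi(x,y,z)=(y,z)$ is a proper biharmonic Riemannian submersion. Viewing the domain metric $e^{2y}dx^2+dy^2+dz^2$ as a warped product over the flat base $(\r^2,dy^2+dz^2)$ with fiber $\r$ and warping exponent $\lambda=y$ (so the metric is $dy^2+dz^2+e^{2\lambda}dx^2$), Theorem \ref{MT2} reduces biharmonicity to equation (\ref{W-P}). Since the base is flat, ${\rm Ricci}^{M}=0$; since $\lambda=y$ is a linear coordinate, ${\rm grad}\,\Delta\lambda={\rm grad}\,0=0$, while $|{\rm grad}\,\lambda|^2\equiv 1$ gives ${\rm grad}\,|{\rm grad}\,\lambda|^2=0$. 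Thus every term of (\ref{W-P}) vanishes, so $\varphi$ is biharmonic, and it is proper because $\lambda$ is non-constant, forcing $\mu\neq 0$ and hence $\tau(\varphi)\neq 0$.

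For the ``only if'' direction, let $\pi:H^2\times\r\to(N^2,h)$ be proper biharmonic. Since $H^2\times\r$ is a BCV space, Theorem \ref{Th1} forces $(N^2,h)$ to be flat, so $(N^2,h)\cong\r^2$ and the first assertion holds; it remains to establish the normal form. I would fix an adapted orthonormal frame $\{e_1,e_2,e_3\}$ with integrability data $\{f_1,f_2,\kappa_1,\kappa_2,\sigma\}$ as in (\ref{R1}), so that $\pi$ being proper biharmonic is exactly the system (\ref{RSB0}) with $K^N\equiv 0$. The decisive additional input is that the domain is $H^2\times\r$ and not an arbitrary $3$-manifold: using the Koszul formula I would read off the Levi-Civita connection, hence the curvature tensor $R^M$, from the brackets (\ref{R1}), and then impose that $R^M$ be that of $H^2\times\r$. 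Concretely this says there is a parallel unit field $W=ae_1+be_2+ce_3$ (the $\r$-direction) with $\nabla^M W=0$ and that the sectional curvature is $-1$ on planes orthogonal to $W$ and $0$ on planes containing $W$; these conditions yield a first-order system tying $a,b,c$ to the integrability data.

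Merging the curvature constraints with the biharmonic system (\ref{RSB0}) and running the case analysis according to which of $\kappa_1,\kappa_2,\sigma$ vanish, I expect to deduce --- after a rotation of the horizontal frame, a gauge choice that leaves $\pi$ unchanged --- that $\sigma\equiv 0$, that one $\kappa_i$ vanishes, and that the surviving data coincide with the integrability data of the warped projection of the ``if'' part. Finally I would reconstruct the map: $\sigma\equiv 0$ makes the horizontal distribution integrable, so $\pi$ is locally a projection, and integrating (\ref{R1}) with the normalized values of $\kappa_i,f_i$ recovers coordinates in which the metric is $e^{2y}dx^2+dy^2+dz^2$ and $\pi$ is $(x,y,z)\mapsto(y,z)$, up to isometries of domain and target. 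The main obstacle is this middle step: solving the coupled nonlinear system obtained by superimposing the $H^2\times\r$ curvature relations on (\ref{RSB0}), organizing the case analysis so as to eliminate every competing normal form, and controlling the frame and isometry freedom in the concluding rigidity argument.
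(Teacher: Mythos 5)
Your ``if'' direction is complete and correct: reading $e^{2y}dx^2+dy^2+dz^2$ as the warped product metric $g_{\r^2}+e^{2\lambda}g_{\r}$ over the flat base $(\r^2,dy^2+dz^2)$ with $\lambda=y$, every term of equation (\ref{W-P}) in Theorem \ref{MT2} vanishes, and properness follows because the fibers have nonzero mean curvature, so $\tau(\varphi)=-d\varphi(\mu)\neq 0$. Deducing flatness of $(N^2,h)$ from Theorem \ref{Th1} is also legitimate and not circular, since that theorem comes from \cite{WO2}, which predates the uniqueness result \cite{WO3}. Note, however, that this survey contains no proof of Proposition \ref{Pr3} (it only cites \cite{WO3}), so your argument must stand entirely on its own.

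It does not yet do so: the whole content of the proposition beyond Theorem \ref{Th1} is the rigidity statement, and that is precisely the step you leave at the level of ``I expect to deduce.'' Your setup is sound --- writing the parallel field as $W=ae_1+be_2+ce_3$, imposing $\nabla^M W=0$ together with curvature $-1$ on planes orthogonal to $W$ is a correct local characterization of $H^2\times\r$, and superimposing these constraints on the system (\ref{RSB0}) with $K^N=0$ is a workable plan --- but you never write down the resulting first-order system, never carry out the case analysis in $(a,b,c,\kappa_1,\kappa_2,\sigma)$, and never show that every branch other than $\sigma\equiv 0$, $\kappa_2\equiv 0$, $\kappa_1$ a nonzero constant is eliminated; that elimination \emph{is} the proof, and acknowledging it as ``the main obstacle'' does not discharge it. A more economical route, and the one implicit in this survey, is to specialize Theorem \ref{Ths1} (also from \cite{WO3}): any proper biharmonic Riemannian submersion from $M^2\times\r$ is either the twisted-product projection of case (i), with twisting function $p$ solving (\ref{mr}), or the non-flat-target map of case (ii). One then only needs to impose that the domain is $H^2\times\r$, i.e.\ that the $(x,y)$-factor has Gauss curvature $-(p_{yy}+p_y^2)=-1$, show this is incompatible with case (ii), and show that in case (i) the constraint $p_{yy}+p_y^2=1$ combined with (\ref{mr}) forces $p=y$ up to isometries of domain and target. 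Either way, a genuine computation is missing: what you have is a correct easy half and a plausible strategy, not a proof.
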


\noindent {\bf 3.3 Biharmonic Riemannian submersions from Thurston's 3-dimensional geometries}

It follows from Theorem \ref{Th0}, Theorem \ref{Th1},  and the relationship between the family of BCV spaces and Thurston's 8 models of 3-dimensional geometries, the classification of  proper biharmonic Riemannian submersions from Sol space will completes the classification of all 8 model spaces of Thurston's 3-dimensional geometries. This is done by the following theorem.

\begin{theorem}\cite{WO4}
There is no proper biharmonic Riemannian submersion\\ $\varphi: Sol\equiv (\r^3,g_{Sol}= e^{2z}{\rm d}x^{2}+e^{-2z}{\rm d}y^{2}+{\rm
 d}z^{2})\to (N^2, h)$ from Sol space into a surface.
\end{theorem}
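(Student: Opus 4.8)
The plan is to run the general machinery of Theorem~\ref{ID3} against the specific Lie-algebraic structure of Sol. First I would fix the standard left-invariant orthonormal frame $E_1=e^{-z}\partial_x,\ E_2=e^{z}\partial_y,\ E_3=\partial_z$ on $(\r^3,g_{Sol})$ and record its structure constants $[E_1,E_2]=0,\ [E_1,E_3]=E_1,\ [E_2,E_3]=-E_2$ (so Sol is unimodular). Any Riemannian submersion $\varphi:Sol\to(N^2,h)$ has a one-dimensional vertical distribution, spanned by a unit field $e_3=aE_1+bE_2+cE_3$ with $a^2+b^2+c^2=1$ for functions $a,b,c$, and a horizontal orthonormal pair $\{e_1,e_2\}$ completing it to an adapted frame. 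The whole argument then reduces to a computation purely in terms of the integrability data $\{f_1,f_2,\kappa_1,\kappa_2,\sigma\}$, with $|\tau(\varphi)|^2=\kappa_1^2+\kappa_2^2$.

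Second, I would impose adaptedness: the defining relations (\ref{R1}) require the horizontal parts of $[e_1,e_3]$ and $[e_2,e_3]$ to vanish and $[e_1,e_2]$ to be basic. Expanding these brackets with the structure constants above expresses $\kappa_1,\kappa_2,\sigma,f_1,f_2$ explicitly as first-order differential expressions in $a,b,c$ and yields the constraint PDEs any such frame must satisfy. As a sanity check, the submersion $(x,y,z)\mapsto(y,z)$ of part (g) above corresponds to $e_3=E_1$, giving the constant data $f_1=-1,\ f_2=0,\ \kappa_1=0,\ \kappa_2=-1,\ \sigma=0$, which satisfies the first equation of (\ref{RSB0}) but produces $-2\neq0$ in the second, confirming it is not biharmonic.

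Third, I would substitute the data into the biharmonic system (\ref{RSB0}). The promising lever is to form the combination $\kappa_1\cdot(\text{first equation})+\kappa_2\cdot(\text{second equation})$: the cubic terms $\kappa_1^2\kappa_2 f_1$ and $\kappa_1\kappa_2^2 f_2$ cancel identically, the $\kappa_1\kappa_2[e_1(f_1)+e_2(f_2)]$ terms cancel, and one is left with a single equation of the schematic form
$$-\tfrac12\Delta|\kappa|^2+|\nabla\kappa_1|^2+|\nabla\kappa_2|^2+2\big\langle(f_1,f_2),\,\kappa_2\,\nabla^{\mathcal{H}}\kappa_1-\kappa_1\,\nabla^{\mathcal{H}}\kappa_2\big\rangle+|\kappa|^2\big(f_1^2+f_2^2-K^N\big)=0,$$
where $|\kappa|^2=\kappa_1^2+\kappa_2^2$. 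The goal is to show that this identity, together with the Sol constraint PDEs from Step~2, forces $|\kappa|^2\equiv0$, i.e. $\varphi$ is harmonic, contradicting properness.

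The main obstacle is that Sol is non-compact, so one cannot simply integrate the displayed identity and run a sign argument. Closing the gap requires genuinely exploiting the Sol structure: I expect the adaptedness constraints of Step~2 to be rigid enough to normalize $a,b,c$ against the isometries of $g_{Sol}$ (the $x$- and $y$-translations and the hyperbolic scaling flow $(x,y,z)\mapsto(e^{-s}x,e^{s}y,z+s)$), reducing the PDE system to ODEs along the fibers and then to an explicit analysis in which the factor $f_1^2+f_2^2-K^N$ can be controlled in sign. Verifying that every branch of this reduced system admits only $\kappa_1=\kappa_2\equiv0$ is where the real work lies; the rigidity already seen in the BCV non-existence results following Theorem~\ref{Th1} strongly suggests the same phenomenon persists once the frame is put in normal form.
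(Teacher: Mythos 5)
Your setup is sound and matches the machinery this survey (and the source \cite{WO4}) actually uses: an adapted frame with the vertical unit field expanded in the left-invariant frame $E_1=e^{-z}\partial_x$, $E_2=e^{z}\partial_y$, $E_3=\partial_z$ (your structure constants are correct), the integrability data $\{f_1,f_2,\kappa_1,\kappa_2,\sigma\}$, and the second-order system (\ref{RSB0}) of Theorem \ref{ID3}. Your sanity check on Example (g) is also correct: that submersion has constant data $f_1=-1$, $f_2=0$, $\kappa_1=0$, $\kappa_2=-1$, $\sigma=0$, with $K^N=-1$, and it satisfies the first equation of (\ref{RSB0}) while failing the second by exactly $-2$. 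The algebra of your third step checks out as well: in $\kappa_1\cdot(\mbox{eq.~1})+\kappa_2\cdot(\mbox{eq.~2})$ the cubic terms and the $\kappa_1\kappa_2\,(e_1(f_1)+e_2(f_2))$ terms cancel, leaving the identity you display.

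The genuine gap is that the proposal stops exactly where the proof must begin, and you say so yourself: ``verifying that every branch of this reduced system admits only $\kappa_1=\kappa_2\equiv 0$'' is not a lemma on the way to the theorem --- it \emph{is} the theorem. Two concrete problems. First, the summed identity is strictly weaker than the system (\ref{RSB0}): it is a single scalar consequence of two equations, its cross term $2\langle(f_1,f_2),\,\kappa_2\nabla^{\mathcal{H}}\kappa_1-\kappa_1\nabla^{\mathcal{H}}\kappa_2\rangle$ carries no sign, and Sol is non-compact, so neither integration by parts nor a maximum principle applies; hence no conclusion of the form $|\kappa|^2\equiv 0$ can be extracted from it, and you offer no replacement. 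Indeed your own Example (g) computation shows the factor $f_1^2+f_2^2-K^N=2>0$ can coexist with $\kappa\not\equiv 0$ and with one of the two equations satisfied, so no soft sign argument on that factor can be expected to close the case. Second, the proposed rescue --- normalizing $a,b,c$ by the isometries of $g_{Sol}$, reducing the constraint PDEs to ODEs along the fibers, and controlling signs branch by branch --- is asserted by analogy with the BCV results, not derived; this normalization and the ensuing case analysis of the possible vertical directions (coupled with the \emph{full} system (\ref{RSB0}), not merely its contraction against $(\kappa_1,\kappa_2)$) constitute the entire technical content of the non-existence proof in \cite{WO4}. As written, your proposal is a correct reduction of the problem to the integrability-data formalism together with a conjecture of equal difficulty to the statement being proved; it is a plan, not a proof.
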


Thus, we can summarize the classification of proper biharmonic Riemannian submersions from Thurston's 3-dimensional geometries as follows.

\begin{theorem}\cite{WO1, WO2, WO4}
A proper biharmonic Riemannian submersion from Thurston's 3-dimensional geometries: $\r^3,\;H^3,\; S^3,\; S^2\times\r,\; H^2\times\r,\; \widetilde{SL}(2,\r), \;Nil, \; Sol$
  exists only  in the cases: $H^2(4m)\times\r \to \r^2$,\; or $\widetilde{SL}(2,\r)\to\r^2$.
\end{theorem}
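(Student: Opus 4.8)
The plan is to establish this classification by assembling the three preceding results into a single exhaustive case analysis over the eight Thurston model geometries, exploiting the structural fact that the Bianchi--Cartan--Vranceanu family accounts for six of the eight. First I would recall the partition already noted in the text: of the list $\r^3, H^3, S^3, S^2\times\r, H^2\times\r, \widetilde{SL}(2,\r), Nil, Sol$, precisely six --- namely $\r^3, S^3, S^2\times\r, H^2\times\r, \widetilde{SL}(2,\r)$ and $Nil$ --- arise as members of the BCV family $M^3_{m,l}$ for suitable choices of $(m,l)$ (here the spherical geometry $S^3$ is realized as $SU(2)$ in the BCV list), while the remaining two, $H^3$ and $Sol$, lie outside the family.

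For the six BCV geometries I would invoke Theorem \ref{Th1} directly: it asserts that a proper biharmonic Riemannian submersion $\pi: M^3_{m,l}\to (N^2,h)$ from any BCV 3-space exists only when the source is $H^2(4m)\times\r$ or $\widetilde{SL}(2,\r)$, and in either case the target is forced to be the Euclidean plane $\r^2$. This immediately eliminates $\r^3, S^3, S^2\times\r$ and $Nil$, and isolates $H^2\times\r$ and $\widetilde{SL}(2,\r)$ as the only admissible sources among the six, with targets $\r^2$.

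It remains to dispose of the two geometries not captured by the BCV family. The hyperbolic space $H^3$ is the space form $M^3(-1)$, so Theorem \ref{Th0} applies verbatim and yields no proper biharmonic Riemannian submersion onto any surface. For the $Sol$ geometry I would cite the non-existence classification of \cite{WO4}, which establishes the same statement for $\varphi: Sol\to (N^2,h)$. Combining these two with the BCV case closes the analysis: among all eight models the only proper biharmonic Riemannian submersions are those from $H^2(4m)\times\r$ and from $\widetilde{SL}(2,\r)$, both onto $\r^2$, exactly as claimed.

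The main obstacle here is not analytic but organizational: the theorem introduces no new computational content, and its proof is purely the merging of earlier classifications. The single point demanding genuine care is verifying that the correspondence between the BCV family and the Thurston list is exact --- in particular, confirming that $\r^3, S^3, S^2\times\r, H^2\times\r, \widetilde{SL}(2,\r)$ and $Nil$ genuinely exhaust the BCV-realizable geometries, that $H^3$ and $Sol$ are the only omissions, and that the curvature normalization $H^2(4m)$ is compatible with the intended hyperbolic factor --- so that no Thurston model is silently dropped or double-counted when the three source theorems are spliced together.
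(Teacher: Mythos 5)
Your proposal is correct and follows essentially the same route as the paper: the text preceding this theorem assembles it from exactly the same three ingredients --- Theorem \ref{Th1} for the six BCV geometries, Theorem \ref{Th0} for the hyperbolic space $H^3$ (the space form not in the BCV family), and the Sol non-existence theorem of \cite{WO4} --- together with the observation that these cases exhaust Thurston's eight models. The only cosmetic difference is that you eliminate $\r^3$ and $S^3$ via the BCV classification rather than via the space-form theorem, which is immaterial since both results cover those cases.
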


Again, we remark that the biharmonicity of Riemannian submersions from 3-dimensional geometries not just imposes conditions the geometry and/or topology of the  the total spaces but also forces the based space to be flat. \\

\noindent {\bf 3.4 Biharmonic Riemannian submersions from $M^2\times \r$}

For biharmonic Riemannian submersions from the product $M^2\times \r$, where $(M^2, g)$ is a general 2-dimensional manifold, we have

\begin{theorem}\label{Ths1}\cite{WO3}
For a  proper biharmonic Riemannian submersion\\ $\pi:M^2\times\r\to (N^2,h)$  from the  product space, we have either\\
(i) $(N^2,h)$ is flat, and  locally, up to an isometry of the domain and/or codomain, $\pi$ is the projection of the special twisted product
\begin{equation}\label{TP}
 \pi: (\r^3, e^{2p(x, y)}dx^2+dy^2+dz^2) \to(\r^2,dy^2+dz^2),\;\pi(x, y, z)=(y,z),
 \end{equation}
with the twisting function $p(x, y)$  such that $p_y\neq0$ and  solves the PDE \\
\begin{equation}\label{mr}
\begin{array}{lll}
\Delta p_y :=p_{yyy}+p_{yy}p_y+e^{-2p(x, y)}(p_{xxy}-p_{xy}p_x)=0, \; {rm or}
\end{array}
\end{equation}

(ii) $(N^2,h)$ is non-flat, and  locally, up to an isometry of the domain and/or codomain, the map can be  expressed as
\begin{equation}\label{th00}
\begin{array}{lll}
\pi:(\r^3,e^{2p(x,y)}dx^2+dy^2+dz^2)\to (\r^2, dy^2+e^{2\lambda(y,\phi)}d\phi^2),\\
\pi(x, y, z)=(y,F\left(z- \int e^{\varphi(x)}dx\right)),
\end{array}
\end{equation}
where $p(x,y)=\ln|\tan\alpha(y)|+\varphi(x)$, $\lambda=\ln |\sin \alpha (y)|+w(\phi)$ with the functions $\varphi(x)$, $w(\phi)$  and nonconstant function $F(u)$ satisfying  $F'(z- \int e^{\varphi(x)}dx)=e^{-w(\phi)}$ and $z- \int e^{\varphi(x)}dx=\int e^{w(\phi)}d\phi$,  and $\alpha(y)$ is the angle between the fibers of $\pi$ and $ E_3=\partial_z$ solving the ODE \begin{equation}\label{zr3}\notag
\begin{array}{lll}
\alpha'''\sin\alpha\cos^2\alpha+\cos\alpha(\sin^2\alpha+3)\alpha'\alpha''+\sin\alpha(2\cos^2\alpha+3)\alpha'^3=0.
\end{array}
\end{equation}
\end{theorem}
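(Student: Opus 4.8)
The plan is to run the integrability-data machinery of Theorem \ref{ID3}, using as the decisive extra input the fact that a product $M^2\times\mathbb{R}$ carries a globally defined parallel unit vector field $V$ tangent to the $\mathbb{R}$-factor, so that $\nabla^M V=0$. First I fix an adapted orthonormal frame $\{e_1,e_2,e_3\}$ with $e_3$ vertical, and I let $\alpha$ denote the angle between $V$ and $e_3$; after a rotation of the horizontal frame $\{e_1,e_2\}$ I may arrange $V=\sin\alpha\,e_1+\cos\alpha\,e_3$. Expressing the Levi--Civita connection through Koszul's formula in terms of the bracket relations (\ref{R1}), the condition $\nabla^M V=0$ becomes a system of scalar equations obtained by pairing $\nabla^M_{e_i}V$ with $e_1,e_2,e_3$.

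Carrying out this pairing, I expect to obtain the structural relations $e_1(\alpha)=0$, $e_2(\alpha)=\sigma$, $e_3(\alpha)=-\kappa_1$, $f_2=0$, $f_1\sin\alpha=\sigma\cos\alpha$ and $\sigma\sin\alpha+\kappa_2\cos\alpha=0$. The degenerate case $\sin\alpha\equiv 0$ makes $V$ vertical, so the fibers are the $\mathbb{R}$-lines and $\pi$ is the (harmonic) projection onto $M^2$; hence for a proper biharmonic submersion $\sin\alpha\neq0$ and $f_2=0$. These relations already dictate the dichotomy: if $\alpha$ is a constant different from $\pi/2$, then $\sigma=e_2(\alpha)=0$ and $\kappa_1=-e_3(\alpha)=0$, and the last relation gives $\kappa_2\cos\alpha=0$, hence $\kappa_2=0$ and the map is harmonic. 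Thus proper biharmonicity forces either $\alpha\equiv\pi/2$ or $\alpha$ nonconstant, which is exactly the flat / non-flat split of the statement.

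In the branch $\alpha\equiv\pi/2$ the relations give $f_1=\sigma=\kappa_1=0$, so the horizontal distribution is integrable, and by the Gauss-curvature formula in Theorem \ref{ID3} one gets $K^N=-(e_2(f_1)+f_1^2)=0$: the base is flat. Choosing semi-geodesic coordinates on $M^2$ so that $g_M=e^{2p(x,y)}dx^2+dy^2$, the submersion is locally the projection of the twisted product (\ref{TP}), for which the only surviving datum is $\kappa_2=-p_y$, so properness reads $p_y\neq0$. Substituting $f_1=f_2=\sigma=\kappa_1=0$, $\kappa_2=-p_y$ into the biharmonic system (\ref{RSB0}) and unwinding $\Delta^M$ in these coordinates should collapse it to the single scalar PDE (\ref{mr}) for the twisting function $p$, establishing case (i).

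The branch where $\alpha$ is nonconstant is where the real work lies and is the main obstacle. Here $\sigma=e_2(\alpha)\neq0$, so the horizontal distribution is non-integrable, and $f_1,\kappa_1,\kappa_2$ are all tied to $\alpha$ through the structural relations. Using $e_1(\alpha)=0$ to normalize coordinates (with $e_2=\partial_y$), I expect the biharmonic system to force $\alpha$ to descend to a function of the single base variable $y$, so that $e_3(\alpha)=0$ and $\kappa_1=0$; integrating the adapted frame then recovers the metric $e^{2p(x,y)}dx^2+dy^2+dz^2$ with $p=\ln|\tan\alpha(y)|+\varphi(x)$, the tilted fibers producing the map $\pi(x,y,z)=(y,F(z-\int e^{\varphi(x)}dx))$ and the base metric $dy^2+e^{2\lambda}d\phi^2$ with $\lambda=\ln|\sin\alpha(y)|+w(\phi)$. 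The hard step is that feeding these into (\ref{RSB0}) yields an overdetermined system whose compatibility is precisely what forces both the separation of variables above and the collapse of the two equations of (\ref{RSB0}) into the single third-order ODE for $\alpha(y)$; carrying out this separation and pinning down the reconstruction relations $F'=e^{-w}$ and $z-\int e^{\varphi}dx=\int e^{w}d\phi$ is the most delicate and computation-heavy part of the argument.
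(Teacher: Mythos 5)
Since this is a survey, the paper itself contains no proof of Theorem \ref{Ths1}: the result is quoted from \cite{WO3}, so your proposal can only be measured against the method of that source. Your overall strategy is indeed the one used there: adapted frame, integrability data $\{f_1,f_2,\kappa_1,\kappa_2,\sigma\}$, and the angle $\alpha$ between the fiber direction and the parallel field $\partial_z$, with $\nabla^M\partial_z=0$ turned into structural relations. Your relations $e_1(\alpha)=0$, $e_2(\alpha)=\sigma$, $e_3(\alpha)=-\kappa_1$, $f_2\sin\alpha=0$, $f_1\sin\alpha=\sigma\cos\alpha$, $\sigma\sin\alpha+\kappa_2\cos\alpha=0$ are correct (they follow from the Koszul formula applied to the brackets (\ref{R1})), and your case (i) is essentially complete: with $\alpha\equiv\pi/2$ one gets $f_1=f_2=\sigma=\kappa_1=0$, hence $K^N=0$; in semi-geodesic coordinates the domain metric is $e^{2p}dx^2+dy^2+dz^2$ with $\kappa_2=-p_y$, and the system (\ref{RSB0}) collapses to $\Delta^M\kappa_2=0$, which is exactly (\ref{mr}).

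The genuine gap is that case (ii), which is the bulk of the theorem, is not proved but only announced with ``I expect.'' Everything that makes the statement nontrivial lives there: that biharmonicity forces $\kappa_1=0$, so that $\alpha$ descends to a function of $y$ alone; the reconstruction of the normal forms $p=\ln|\tan\alpha(y)|+\varphi(x)$, $\lambda=\ln|\sin\alpha(y)|+w(\phi)$, the relations $F'=e^{-w}$ and $z-\int e^{\varphi}dx=\int e^{w}d\phi$; and the collapse of the two equations (\ref{RSB0}) into the single third-order ODE for $\alpha$. Two subsidiary points also need repair. First, your claim that in the nonconstant branch $\sigma=e_2(\alpha)\neq0$ does not follow: nonconstancy only says $e_2(\alpha)$ or $e_3(\alpha)$ is nonzero somewhere. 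Second, your assertion that the split ``$\alpha\equiv\pi/2$ versus $\alpha$ nonconstant'' is ``exactly'' the flat/non-flat split is premature: you prove $\alpha\equiv\pi/2\Rightarrow K^N=0$, but the converse direction (nonconstant $\alpha\Rightarrow$ non-flat base) is not available at that point. In the normal form of case (ii) one computes $K^N=-(\sin\alpha)''/\sin\alpha$, so flatness would mean $\sin\alpha$ affine in $y$; substituting $\alpha'=a\sec\alpha$ into the third-order ODE yields $a^3\tan\alpha\left[2\tan^2\alpha+\sec^2\alpha\,(8+\cos^2\alpha)\right]=0$, forcing $a=0$. Thus the alignment of the two dichotomies is a \emph{consequence} of the deferred case (ii) computation, not an input you may assume; without carrying out that computation the proposal is a correct plan with a correct warm-up case, not a proof.
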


Note that the Riemannian submersion in Case (ii) of Theorem \ref{Ths1} is a map (\ref{th00}) depending on the function $F$, the following corollary shows that after changes of coordinates,  a proper biharmonic Riemannian submersions from a product manifold onto a non-flat surface  can be described explicitly as a simple map  between two special warped product manifolds with the warping functions solving an ODE.

\begin{corollary}\label{co1}\cite{WO3}
A proper biharmonic Riemannian submersion $\pi:M^2\times\r\to (N^2,h)$ from product manifold into a non-flat surface is locally, up to an isometry of the domain and/or codomain, $\pi$ is a map between two special warped product spaces  given by
\begin{equation}\label{coo1}
\begin{array}{lll}
\pi:(\r^3, \tan ^2\alpha(y)\,d t^2+dy^2+dz^2)\to (\r^2,dy^2+\sin^2 \alpha(y)\,d\psi^2)\\
\pi(t,y,z)=(y,z- t),
\end{array}
\end{equation}
where $\alpha(y)$ is the angle between the fibers of  $\pi$ and $ E_3=\frac{\partial}{\partial z}$ solving the ODE
\begin{equation}\label{cx}
\begin{array}{lll}
\alpha'''\sin\alpha\cos^2\alpha+\cos\alpha(\sin^2\alpha+3)\alpha'\alpha''+\sin\alpha(2\cos^2\alpha+3)\alpha'^3=0.
\end{array}
\end{equation}
\end{corollary}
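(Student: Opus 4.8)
The plan is to derive this corollary directly from Case (ii) of Theorem \ref{Ths1} by showing that the apparently complicated description there collapses to the stated normal form after two one-variable reparametrizations, one on the domain and one on the codomain, each of which is a local isometry of the relevant factor. First I would substitute the explicit expressions $p(x,y)=\ln|\tan\alpha(y)|+\varphi(x)$ and $\lambda(y,\phi)=\ln|\sin\alpha(y)|+w(\phi)$ into the two metrics appearing in (\ref{th00}). This factorizes the domain metric as $e^{2p}dx^2+dy^2+dz^2=\tan^2\alpha(y)\,e^{2\varphi(x)}dx^2+dy^2+dz^2$ and the target metric as $dy^2+e^{2\lambda}d\phi^2=dy^2+\sin^2\alpha(y)\,e^{2w(\phi)}d\phi^2$, isolating the $x$- and $\phi$-dependent conformal factors $e^{2\varphi(x)}$ and $e^{2w(\phi)}$ from the $\alpha(y)$-dependent parts.

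Next I would remove these two factors by the substitutions $t=\int e^{\varphi(x)}\,dx$ on the domain and $\psi=\int e^{w(\phi)}\,d\phi$ on the codomain. Since $e^{\varphi}>0$ and $e^{w}>0$, each is a strictly monotone function of its single variable, hence a local diffeomorphism; because $dt^2=e^{2\varphi}dx^2$ and $d\psi^2=e^{2w}d\phi^2$, the induced coordinate changes (fixing $y,z$ and $y$ respectively) are genuine local isometries carrying the domain metric to $\tan^2\alpha(y)\,dt^2+dy^2+dz^2$ and the target metric to $dy^2+\sin^2\alpha(y)\,d\psi^2$, which are exactly the two special warped products in (\ref{coo1}). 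This is what accounts for the phrase \emph{up to an isometry of the domain and/or codomain} in the statement.

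It then remains to track the map through these changes. The argument of $F$ in (\ref{th00}) is precisely $z-\int e^{\varphi(x)}dx=z-t$, so the image point has $\phi$-coordinate $\phi=F(z-t)$; in the new target coordinate its $\psi$-value is $\psi=\int e^{w(\phi)}d\phi$ evaluated at $\phi=F(z-t)$. By the two compatibility relations $F'(u)=e^{-w(\phi)}$ and $u=\int e^{w(\phi)}d\phi$ of Theorem \ref{Ths1} --- which together say exactly that $\phi\mapsto\int e^{w}d\phi$ and $u\mapsto F(u)$ are mutually inverse reparametrizations --- this evaluates to $\psi=z-t$. Hence in the new coordinates the map becomes $\pi(t,y,z)=(y,z-t)$, as claimed, while the ODE (\ref{cx}) for the angle $\alpha(y)$ is identical to the one produced by Theorem \ref{Ths1}, being intrinsic to $\alpha$ and therefore unaffected by the reparametrizations in $x$ and $\phi$.

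The one step that needs care is the last one: verifying that the single function $F$ and the codomain reparametrization $\psi=\int e^{w}d\phi$ are forced to be mutual inverses, so that $F$ and the warping $w$ cancel \emph{simultaneously} in the metric and in the formula for the map. This cancellation is precisely the content of the two relations imposed on $F$ in Theorem \ref{Ths1}, so beyond this bookkeeping no new computation is required; the remaining verifications --- that the substitutions are isometries and that the stated ODE survives unchanged --- are routine.
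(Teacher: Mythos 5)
Your proposal is correct and matches the paper's intended derivation: the corollary is obtained from Case (ii) of Theorem \ref{Ths1} exactly by the coordinate changes $t=\int e^{\varphi(x)}dx$ and $\psi=\int e^{w(\phi)}d\phi$, which isolate and absorb the factors $e^{2\varphi(x)}$ and $e^{2w(\phi)}$ so that the metrics become the two warped products in (\ref{coo1}), while the compatibility relations on $F$ say precisely that $F$ and $\phi\mapsto\int e^{w}d\phi$ are mutually inverse, forcing $\pi(t,y,z)=(y,z-t)$ and leaving the ODE for $\alpha(y)$ untouched.
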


%\begin{ack}
%\end{ack}


\begin{thebibliography}{199}


\bibitem{AO} M. Akyol and Y. -L. Ou, {\em Biharmonic Riemannian submersions}, Annali di Mate. Pura ed Appl., 2019, https://doi.org/10.1007/s10231-018-0789-x. 
%\bibitem{BK} P. Baird and D. Kamissoko, {\em On constructing biharmonic maps and metrics}, Ann. Global Anal. Geom. 23 (2003), no. 1, 65--75.
\bibitem{BFO} P. Baird, A. Fardoun and S. Ouakkas, {\em Conformal and semi-conformal biharmonic maps}, Ann. Glob. Anal. Geom., 34 (2008), 403-414.
%\bibitem{BLO11} P. Baird, E. Loubeau, and C. Oniciuc, {\em Harmonic and biharmonic maps from surfaces},   Contemp. Math., 542 (2011). ,  223--230. 

%\bibitem{BO2}  P. Baird and  Y. -L. Ou, {\em Biharmonic conformal maps in dimension four and equations of Yamabe-type},  J. Geom. Anal. 28(4) (2018),  3892-3905.

\bibitem{BW} P. Baird and J. C. Wood, {\em Harmonic morphisms between Riemannian manifolds}, London Math. Soc. Monogr. (N.S.) No. 29, Oxford Univ. Press, 2003. 

%\bibitem{BLO}  P. Baird, E. Loubeau, and C. Oniciuc, {\em  Harmonic and biharmonic maps from surfaces},   Contemp. Math., 542 (2011),  223--230.

%\bibitem{Ba04} A. Balmu\c s,  {\em Biharmonic properties and conformal changes},  An. Stiint. Univ. Al. I. Cuza Iasi. Mat. (N.S.)  50(2004), 361--372.

\bibitem{BMO} A.  Balmu\c s, S. Montaldo, C. and Oniciuc, {\em Biharmonic maps between warped product manifolds},  J. Geom. Phys., 57. (2007), 449--466.

\bibitem{CMO} R. Caddeo, S. Montaldo,  and  C. Oniciuc,  {\em Biharmonic submanifolds of $S^3$}, Internat. J. Math., 12(2001),  867--876.

%\bibitem{Ch}  S. -Y. Chang,   {\it Conformal Invariants and Partial Differential Equations}, (2004). 
\bibitem{Ch1} B. Y. Chen, {\em Some open problems and conjectures on submanifolds of finite type}, Soochow J. Math. 17(2) (1991), 169--188.
\bibitem{CI} B. Y. Chen and  S. Ishikawa, {\em Biharmonic surfaces in pseudo-Euclidean spaces},  Memoirs Fac. Sci. Kyushu Univ. Ser. A, Math., 45(1991), 323--347.

%\bibitem{CN84} B. Y. Chen and T. Nagano, {\em Harmonic metrics, harmonic tensors, and Gauss maps}, J. Math. Soc. Japan, 36(1984),  295--313.
\bibitem{EL78} J. Eells and L. Lemaire, {\em A report on harmonic maps}, Bull. London Math. Soc. 10 (1)  (1978), 1--68.
\bibitem{EL83} J. Eells and L. Lemaire, {\em Selected topics in harmonic maps}, CBMS, 50, Amer. Math. Soc. (1983).
\bibitem{EL88} J. Eells and L. Lemaire, {\em Another report on harmonic maps}, Bull. London Math. Soc.  20 (5) (1988), 385--524.

%\bibitem{Fu78} B. Fuglede, {\em Harmonic morphisms between Riemannian manifolds}, Ann. Inst. Fourier (Grenoble),  28 (1978), 107-144.

\bibitem{GO} E. Ghandour and Y. -L. Ou, {\em Generalized harmonic morphisms and horizontally weakly conformal biharmonic maps}, J. Math. Anal. Appl., 464 (2018), 924-938.

%\bibitem{Is79} T. Ishihara, {\em  A mapping of Riemannian manifolds which preserves harmonic functions},  J. Math. Kyoto Univ.   19(2) (1979), 215--229. 

\bibitem{Ji86} G. Y. Jiang, {\em $2$-Harmonic maps and their first and second variational formulas}, Chin. Ann. Math. Ser. A,  7(1986) 389-402.
\bibitem{Ji87} G. Y. Jiang, {\em Some non-existence theorems of $2$-harmonic isometric immersions into Euclidean spaces}, Chin. Ann. Math. Ser. A,  8 (1987) 376-383.

%\bibitem{LO01} E. Loubeau and Y. -L. Ou, {\em The characterizations of biharmonic morphisms},  Differential geometry and its applications, Math. Publ., 3(2001), 31--41.

\bibitem{LO10} E. Loubeau and Y. -L. Ou, {\em Biharmonic maps and morphisms from conformal mappings}, Tohoku Math J., 62 (1), (2010), 55-73.

\bibitem{O} B. O'Neill, {\em The fundamental equations of a submersion}, Michigan Math. J. 13 (1966), 459--469.
%\bibitem{O1} B. O'Neill, {\em Semi-Riemannian geometry with applications to relativity}, Pure and Applied Mathematics, 103. Academic Press, Inc. (Harcourt Brace
%Jovanovich, Publisher), New York, (1983).
\bibitem{On} C. Oniciuc, {\em Biharmonic maps between Riemannian manifolds}, An. Stiint. Univ. Al. I. Cuza Iasi Mat (N.S.) 48 (2002), 237--248.
\bibitem{On2} C. Oniciuc, {\em Biharmonic submanifolds in space forms}, Habilitation Thesis, (2012), www.researchgate.net, https://doi.org/10.13140/2.1.4980.5605.

%\bibitem{On03} C. Oniciuc, {\em New examples of biharmonic maps in spheres}, Colloq. Math. 97 (2003), no. 1,131-139.

%\bibitem{Ou00}  Y. -L. Ou, {\em Biharmonic morphisms between Riemannian manifolds}, Geometry and Topology of Submanifolds X, World Sci. Publishing, NJ, 2000, 231-239.

%\bibitem{Ou06}  Y. -L. Ou, {\em $p$-harmonic morphisms, biharmonic morphisms, and nonharmonic biharmonic maps}, J. Geom. Phys.,  56(2006), 358--374.
%\bibitem{Ou09} Y. -L. Ou, {\em On conformal biharmonic immersions}, Ann. Global Anal. Geom. 36, 2009, 133-142.
%\bibitem{Ou10}  Y. -L. Ou, {\em Biharmonic hypersurfaces in Riemannian manifolds}, Pacific J. Math.,  248(2010), 217--232.
%\bibitem{Ou14} Y. -L. Ou, {\em On $f$-biharmonic maps and $f$-biharmonic submanifolds},  Pacific J. of Math,  (2014), 461--477. 
%\bibitem{Ou15} Y. -L. Ou, {\em Biharmonic conformal immersions into 3-dimensional manifolds},   Medierranean J. of Math, 12 (2015), 541--554.
%\bibitem{Ou16}  Y. -L. Ou, {\em Some recent progress of biharmonic submanifolds}, Recent advances in the geometry of submanifolds - dedicated to the memory of Franki Dillen (1963--2013),  Contemp. Math., Amer. Math. Soc., Providence, RI., 674 (2016), 127--139. 
%\bibitem{Ou17}  Y. -L. Ou, {\em f-biharmonic maps and f-biharmonic submanifolds II},  J. Math. Anal. Appl., 455(2017), 1285--1296.
\bibitem{OC}  Y. -L. Ou and B. -Y. Chen, {\em Biharmonic submanifolds and biharmonic maps in Riemannian geometry}, World Scuentific,  2020.
%\bibitem{Ta94} H. Takeuchi, {\em Some conformal properties of $p$-harmonic maps and regularity for sphere-valued $p$-harmonic maps }, J. Math. Soc.
%Japan, 46(1994), 217-234.
%\bibitem{Th00}   G. Thorbergsson, {\em A survey on isoparametric hypersurfaces and their generalizations}, In Handbook of differential geometry, Vol. I, North - Holland, Amsterdam, 2000, 963 - 995.
\bibitem{Ur18} H. Urakawa, {\em Harmonic maps and biharmonic maps on principal bundles and warped products},  J. Korean Math. Soc., 55(3), (2018), 553-574.
\bibitem{Ur19} H. Urakawa, {\em Harmonic maps and biharmonic Riemannian submersions}, Note di Mate. 39 (1) (2019), 1--23.
\bibitem{WO1} Z. -P. Wang and   Y. -L. Ou,  {\em Biharmonic Riemannian submersions from 3-manifolds},  Math.  Zeitschrift, 269 (3) (2011), 917-925.
\bibitem{WO2} Z. -P. Wang and   Y. -L. Ou,   {\em Biharmonic Riemannian submersions from a 3-dimensional BCV space}, J. Geom. Anal., 2024 to appear.
\bibitem{WO3} Z. -P. Wang and   Y. -L. Ou,   {\em Biharmonic isometric immersions into and biharmonic Riemannian submersions from $M^2\times \r$}, preprint 2023, arXiv:2302.11545.
\bibitem{WO4} Z. -P. Wang, Y. -L. Ou, and Q.-L. Liu  {\em Harmonic and biharmonic Riemannain submersions from Sol space}, preprint 2023, arXiv:2302.11693.
\bibitem{WO5} Z. -P. Wang and   Y. -L. Ou,   {\em Biharmonic isometric immersions into and biharmonic Riemannian submersions from a generalized Berger sphere }, preprint 2023, arXiv:2302.11692.
\bibitem{YS97} S. T. Yau and R. Schoen, {\em Lectures on harmonic maps}, International Press Incorporated, Boston 385 Someville Ave, Someville, MA, U.S.A, 1997.

\end{thebibliography}
\end{document}